\documentclass{amsart} 

\usepackage{main}

\title{Singular rational curves on elliptic K3 surfaces}
\author{Jonas Baltes}

\begin{document} 
	\begin{abstract}
		We show that on every elliptic K3 surface there are rational curves $(R_i)_{i\in \NN}$  such that $R_i^2 \to \infty$, i.e., of unbounded arithmetic genus. Moreover, we show that the union of the lifts of these curves to $\PP(\Omega_X)$ is dense in the Zariski topology. As an application we give a simple proof of a theorem of Kobayashi in the elliptic case, i.e., there are no globally defined symmetric differential forms.
	\end{abstract}
	\maketitle	
	\section{Introduction}
		Let $X$ be a complex projective K3 surface. A recent result by Chen--Gounelas--Liedtke \cite{chen2019curves} completed the proof of the conjecture that there are infinitely many rational curves on $X$. Their method also provides information on the classes of these curves in the Picard group if the Picard rank is small. Unfortunately, the folklore conjecture that for every ample class $H\in \Pic X$ there are infinitely many rational curves in $\bigcup_m |mH|$ still remains unknown even for small ranks greater or equal to $2$. In {\itshape loc.\ cit.}\  the following weaker question is posed.
		\begin{question*}
			Does every projective K3 surface $X$ admit rational curves $R_i\subset X$ such that $\lim_i R_i^2 = \infty$?
		\end{question*}
		As there are infinitely many rational curves the question has a positive answer as long as $|\mathrm{Aut}(X)|<\infty$: For a fixed even natural number $2d \in 2\NN$ there are only finitely many orbits of classes $[C]\in \Pic X$ with $C$ an irreducible curve and $C^2= 2d$ under the action of the automorphism group. Moreover the techniques of {\itshape loc.\ cit.}\ prove the question for Picard ranks $1$ and $2$ as well. In this paper we will answer the question positively in the case of elliptic K3 surfaces, too.
		\begin{theorem}
			\label{thm:MainTheorem1}
			Let $X\to\PP^1$ be an elliptic K3 surface. Then there are rational curves $R_i \subset X$ such that $R_i^2 \to \infty$. 
		\end{theorem}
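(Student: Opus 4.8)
The plan is to feed the Chen--Gounelas--Liedtke theorem into the geometry of $\pi$. Write $F\in\Pic X$ for the fibre class, so that $F^{2}=0$. By \cite{chen2019curves} there are infinitely many rational curves on $X$, and since $\pi$ has only finitely many singular fibres, each with finitely many components, all but finitely many of these curves are \emph{multisections}, i.e.\ irreducible rational curves $R$ with $R\cdot F\ge 1$. If the self-intersections of rational multisections are unbounded we are done, so I fix one such curve $R_{0}$, of degree $d:=R_{0}\cdot F\ge 1$, and try to produce, for every $m$, an irreducible rational curve in the class $[R_{0}]+m[F]$, whose self-intersection is $R_{0}^{2}+2md\to\infty$.

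The construction is to attach fibres and smooth. Suppose first that $\pi$ has a singular fibre $N$ of type $I_{1}$ or $II$, so that $N$ is an irreducible rational curve; this is the case, for instance, whenever $\rho(X)=2$, since then $\pi$ has no reducible fibres at all. Attach to the normalisation $\widetilde R_{0}\cong\PP^{1}$ a second component $\PP^{1}\to N$ of degree $m$, glued at one of the $d$ points of $R_{0}\cap N$: this is a genus-zero stable map in the class $[R_{0}]+m[F]$ with image $R_{0}\cup N$. Smoothing the single node joining the two components --- a deformation which exists on a K3 surface by the smoothing technology for rational curves underlying \cite{chen2019curves}, which rests on the reduced obstruction theory --- produces an irreducible rational curve in $[R_{0}]+m[F]$, and letting $m\to\infty$ settles the theorem in this case. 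The reason the arithmetic genus genuinely grows, rather than the limit curve breaking into bounded pieces, should be that $N$ is irreducible, so the attached tooth contributes $m[F]$ to the class at the cost of a single gluing node.

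The main obstacle is to make this uniform over \emph{all} elliptic K3 surfaces. When $\pi$ has reducible fibres but no fibre of type $I_{1}$ or $II$ --- which forces $\rho(X)\ge 3$ --- attaching components of reducible fibres only yields rational curves of bounded self-intersection, as a short intersection computation shows, so a further input is needed. Two natural ones: (i) replace $\pi$ by an auxiliary elliptic fibration on $X$, built from a suitable $(-2)$-class, which does carry an $I_{1}$ fibre; or (ii) base change $\pi$ along the degree-$d$ map $h\colon\PP^{1}\to\PP^{1}$ induced by $\widetilde R_{0}\to\PP^{1}$, obtaining a Jacobian elliptic surface $Y\to\PP^{1}$, and pushing sections $\sigma\in\mathrm{MW}(Y)$ forward, along the resulting generically finite map $Y\to X$, to irreducible rational curves $R_{\sigma}\subset X$ with $R_{\sigma}\cdot F=d$ and, one checks, $R_{\sigma}^{2}\to\infty$ as the N\'eron--Tate height of $\sigma$ grows. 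Option (ii) reduces the theorem to showing that the base change strictly increases the Mordell--Weil rank, and I expect this --- together with a Shioda--Tate analysis keeping the reducible fibres of $Y$ under control --- to be the delicate heart of the argument. Finally, and needed only for the application to Kobayashi's theorem from the abstract, the construction produces these curves in infinitely many numerical classes, through varying base points and with varying tangent directions, which should yield the Zariski density of their lifts in $\PP(\Omega_{X})$.
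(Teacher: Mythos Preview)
Your approach is entirely different from the paper's, and as written it has a genuine gap.

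The central problem is the smoothing step. The genus-zero stable map you build --- $\widetilde R_{0}$ glued at one point to a $\PP^{1}$ mapping $m$-to-$1$ onto the nodal fibre $N$ --- is \emph{not rigid}: degree-$m$ maps $\PP^{1}\to\PP^{1}$ (and hence to $N$ through its normalisation) move in a $(2m-2)$-dimensional family, so your stable map lies on a positive-dimensional locus of reducible maps inside $\overline{M}_{0}(X,[R_{0}]+m[F])$. The regeneration machinery you invoke from \cite{chen2019curves} is designed precisely to avoid this situation by attaching \emph{rigid} tails; it does not apply here, and without rigidity there is no reason the stable map deforms to one with irreducible domain rather than simply sliding around inside the reducible locus. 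Your fallback options (i) and (ii) are reasonable programmes but you do not carry either one out: you have not shown one can always switch to a fibration with an $I_{1}$ or $II$ fibre, and in (ii) both the growth of the Mordell--Weil rank under base change and the claim that $R_{\sigma}^{2}\to\infty$ with the height of $\sigma$ are left as expectations. As it stands the proposal is a sketch with its ``delicate heart'' missing.

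For comparison, the paper uses no deformation theory at all. It exploits the group structure of the fibration directly: the fibrewise multiplication maps $g_{n}\colon X\cong J^{1}(X)\dashrightarrow J^{n}(X)$ send a fixed rational multisection $R$ to rational curves $g_{n}(R)$, and the point is that if $R$ is not \emph{quasi-torsion} (a notion introduced in the paper, generalising torsion multisections) then for suitably chosen $n$ the map $g_{n}$ identifies many pairs of points of $R$ lying in common fibres, forcing $g_{n}(R)$ to acquire arbitrarily many locally reducible singularities and hence unbounded self-intersection. The real work is then to produce a rational non-quasi-torsion curve, which the paper does separately in the non-isotrivial and isotrivial cases (changing the fibration, as you also contemplate, only in a narrow isotrivial subcase). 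No stable maps, no smoothing, and the density in $\PP(\Omega_{X})$ comes from the same multiplication maps rather than from varying attachment data.
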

		In other words, the only missing cases are non-elliptic K3 surfaces of Picard rank $3$ or $4$ with infinite automorphism group.
		
		The method of the proof of Theorem \ref{thm:MainTheorem1} builds on the techniques by Bogomolov--Tschinkel \cite{bogomolov1999density} and Hassett \cite{hassett2003potential} who constructed infinitely many rational curves on a complex elliptic K3 surface. Their results have since also been extended to  characteristic  $p>3$ by Tayou in \cite{tayou2018rational}. The main idea is to start with a rational curve $R$ and look at its image under certain rational maps between elliptic K3 surfaces.
		As it turns out the main problem faced in these papers is that the initial rational curve $R$ might be torsion, which prevents the images from giving \emph{new} curves. Here, torsion means  that for any two points in $R\cap X_t$ of a smooth fiber their difference in $\mathrm{Jac}^0(X_t)$ is torsion.
		
		In our case we look at the same construction and examine when the image of the curve $R$ will have more singularities. What prevents the images from doing so is very similar to being a torsion section which leads to our main definition of \emph{quasi-torsion sections}, see Section \ref{sec:QuasiTorsion}. The existence of rational non-quasi-torsion curves will then be carried out in Section \ref{sec:ExistenceRNQT} which will then be needed to produce the rational curves with an unbounded number of singularities in Section \ref{sec:ProducingSingularities}.
		
		In Section \ref{sec:JetSpace} we will apply the methods to examine lifts of rational curves in the first jet space $\PP(\Omega_X)$ by which we mean the space of one-dimensional quotients of $\Omega_X$. Recall the construction of such lifts: For every curve $C\subset X$ and its normalization $f\colon \tilde{C}\to C\hookrightarrow X$ the usual short exact sequence of cotangent bundles gives a map
		\begin{equation*}
			f^*\Omega_X\to \Omega_{\tilde{C}}^1.
		\end{equation*}
		Denote its torsion free image by $L$ which is automatically a line bundle. Then the surjective map $f^*\Omega_X\twoheadrightarrow L$ gives rise to a lift $\tilde{C}\to \PP(\Omega_X)$. If $C$ is rational then $\deg L <0$ and the lift is negative with respect to $\oO_{\PP(\Omega_X)}(1)$. It turns out that these pathological curves form a dense subset.
		\begin{theorem}
			\label{thm:MainTheorem2}
			Let $X\to\PP^1$ be an elliptic K3 surface. Then the union of lifts of rational curves to the jetspace $\PP(\Omega_X)$ is Zariski-dense.
		\end{theorem}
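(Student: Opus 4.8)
The plan is to prove the stronger statement that already the union of the lifts of the rational curves $R_n$ produced in Section~\ref{sec:ProducingSingularities} is Zariski dense. Write $p\colon\PP(\Omega_X)\to X$ for the projection and let $\Delta\subset\PP(\Omega_X)$ be the surface swept out by the lifts of the fibres of $X\to\PP^1$ (the ``vertical'' directions). Suppose for contradiction that the Zariski closure $Z$ of the union of the lifts is a proper subvariety. Since the $R_n$ are infinitely many distinct irreducible curves on the surface $X$ (their self-intersections tend to $\infty$), their union is Zariski dense in $X$, so $p(Z)=X$ and $Z$ has a component dominating $X$. A component $Z_i$ of $Z$ that does not dominate $X$ lies over a curve in $X$, so the lifts contained in it come from curves $R_n$ lying in that curve, of which there are only finitely many; hence all but finitely many $R_n$ have lifts contained in dominating components, and by pigeonhole one such component $Z_0$ contains the lifts of an infinite family $(R_n)_{n\in S}$. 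Then $\bigcup_{n\in S}R_n$ is again Zariski dense in $X$, so $p|_{Z_0}\colon Z_0\to X$ is a generically finite morphism of surfaces, finite of some degree $e$ over a dense open $U\subseteq X$; moreover $Z_0\neq\Delta$, because the lift of a rational curve is never contained in $\Delta$ (a rational curve is not a fibre).

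Next I would make the lifts explicit. Over a general $t\in\PP^1$ I would use the local model of the multiplication construction of Section~\ref{sec:ProducingSingularities}: in the translation-invariant fibre coordinate $u$ the construction is $(t,u)\mapsto(t,nu)$, the curve $R$ is a union of graphs $u=\alpha_j(t)$ for $j=1,\dots,d$, and $R_n$ is the union of the graphs $u=n\alpha_j(t)$. Thus the lift of $R_n$ passes through $(t,n\alpha_j(t))$ with tangent of slope $du/dt=n\,\alpha_j'(t)$. Since $R$ is not quasi-torsion, for general $t$ the value $\alpha_j(t)$ is non-torsion in $X_t$ and the branch is non-constant, i.e.\ $\alpha_j'(t)\neq 0$; I fix such a $t$ and such a $j$.

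To conclude I would argue by boundedness. The curve $C_t:=Z_0\cap p^{-1}(X_t)$ is compact, and the slope function $\sigma$, which equals $\infty$ exactly along $\Delta$, has only finitely many poles on $C_t$ because $Z_0\neq\Delta$; removing small neighbourhoods of these poles leaves a compact set on which $|\sigma|\le M$ for some constant $M$. The points $u_n:=n\alpha_j(t)\in X_t$ are pairwise distinct since $\alpha_j(t)$ is non-torsion, and in fact equidistribute in $X_t$, so for infinitely many $n\in S$ the point $(t,u_n)$ avoids those neighbourhoods as well as the finitely many points of $X_t$ over which $Z_0\to X$ fails to be finite onto its image; for such $n$ the lift of $R_n$ over $(t,u_n)$ is a point of $Z_0$ where $|\sigma|\le M$, i.e.\ $|n\,\alpha_j'(t)|\le M$. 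Letting $n\to\infty$ along $S$ contradicts $\alpha_j'(t)\neq 0$, so $Z=\PP(\Omega_X)$.

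The main obstacle is the middle step: transporting from Sections~\ref{sec:QuasiTorsion}--\ref{sec:ProducingSingularities} a sufficiently precise local model for $R_n$ and its lift, identifying the exact consequence of failing to be quasi-torsion that is needed here (non-torsion and non-constant branches), and handling elliptic fibrations without a section, where the multiplication construction is only available after a base change and the curves $R_n$ must be produced accordingly.
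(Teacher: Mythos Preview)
Your overall strategy---pass to a dominating irreducible component $Z_0\subset\PP(\Omega_X)$ and derive a contradiction over a single fibre $X_{t_0}$ via an unbounded-slope argument---is different from the paper's, and the decisive step has a genuine gap in the non-isotrivial case. The ``slope function $\sigma$'' you invoke, finite off $\Delta$ and equal to $\infty$ along $\Delta$, does not exist globally on $\PP(\Omega_X)|_{X_{t_0}}$: in a standard model the deck transformation $(z,t)\mapsto(z+a\tau(t)+b,t)$ sends $dz\mapsto dz+a\tau'(t)\,dt$ (Remark~\ref{rem:TangentSpace}), so $dz/dt$ is only well-defined modulo $\ZZ\tau'(t_0)$. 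Equivalently, the nonvanishing Kodaira--Spencer class makes $T_X|_{X_{t_0}}$ the nontrivial extension of $\oO$ by $\oO$, and $\PP(T_X)|_{X_{t_0}}\setminus\Delta$ is a nontrivial affine bundle over $X_{t_0}$ with no global linear coordinate. Concretely, after translating $u_n=n\alpha_j(t_0)$ into a fixed fundamental-domain chart the slope of the lift of $R_n$ becomes $n\alpha_j'(t_0)-a_n\tau'(t_0)$ with $a_n=nA+O(1)$, where $\alpha_j(t_0)=A\tau(t_0)+B$ with $A,B\in\RR$; hence the slope equals $n\bigl(\alpha_j'(t_0)-A\tau'(t_0)\bigr)+O(1)$. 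Your contradiction therefore needs $\alpha_j'(t_0)\neq A\tau'(t_0)$, not merely $\alpha_j'(t_0)\neq 0$. This stronger condition \emph{does} hold for generic $t_0$ when the branch is not elementary quasi-torsion (if $\alpha_j'/\tau'$ is real-valued and holomorphic it is constant, forcing $\alpha_j=c\tau+d$ with $c\in\RR$, and matching $c=A$ then forces $d\in\RR$), so your approach is salvageable---but only by invoking the quasi-torsion condition in precisely the way you flagged as ``the main obstacle'' and then guessed incorrectly (``non-torsion and non-constant branches'' is not enough).

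The paper bypasses this difficulty by a different mechanism. Rather than tracking slopes along a moving sequence of base points $u_n$, it uses Corollary~\ref{cor:TorsionDense} to locate, over any open $V$, points $t_j$ and integers $n_j$ with $(n_j-1)f(t_j)\in\ZZ\tau(t_j)+\ZZ$; then all the curves $G_{n_j^k}(C)$ for $k\ge 1$ pass through the \emph{same} point $f(t_j)\in X_{t_j}$. Comparing their tangent directions at that single point---where no global slope function is needed, only a fixed $\PP^1$---one computes that they all agree only if $\partial_t f(t_j)=\tfrac{a_j}{n_j-1}\partial_t\tau(t_j)$, and that this holding for a dense set of $t_j$ forces the branch to be elementary quasi-torsion. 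Thus infinitely many distinct tangent directions lie in one fibre $\operatorname{pr}^{-1}(f(t_j))\cong\PP^1$, and since such $t_j$ exist over every open $V$ this yields Zariski density. The trade-off: your route, once repaired, is conceptually appealing (a compactness/unboundedness dichotomy) but must confront the nontriviality of $T_X|_{X_t}$ head-on; the paper's route is more computational but localises the comparison to a single point and thereby avoids that obstruction entirely.
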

		In Section \ref{sec:Applications} we give some easy consequences of these results. For example the above mentioned density yields a short proof of Kobayashi's theorem in the elliptic case, see Theorem \ref{thm:Kobayashi}.
		
		In \cite{chen2013density} Chen-Lewis were concerned with the conjecture that the union of rational curves  on $X$ is dense in the \emph{usual} topology. For elliptic K3 surfaces they proved this as long as there exists a rational multisection on $X$ that is not torsion. As a by-product of our theorems we see that the elliptic structure can be changed in such a way that there exists such a multisection and hence density of rational curves holds for \emph{every} elliptic K3 surface, see Corollary \ref{cor:DensityUsual}.\\
		\textbf{Notations: }Let $p\colon X\to B$ be an elliptic fibration and $U\subset B$ be the subset on which the fibration is smooth. By $(\;)_U$ we mean the restriction to $p^{-1}(U)$. If the fibration is moreover Jacobian, i.e., it admits a section, then we denote the closure of the $m$-torsion of the fibers by $X[m]$. The upper halfplane in $\CC$ is denoted by $\HH$.\\
		\textbf{Acknowledgements: }The author would like to thank his adviser Frank Gounelas for many hours of helpful discussions and his advice on several draft versions. 
	\section{Background on Elliptic K3 surfaces and Jacobians}
	\label{sec:background}
		We start by collecting facts on elliptic K3 surfaces, which we always assume to be projective. For a detailed discussion, see \cite[Chapter 11]{huybrechts2016lectures}.
		
		Let $X\to\PP^1$ be an elliptic K3 surface. Its index $d_0\in \NN$ is defined as
		\begin{equation*}
		    d_0 = \min\{0< c_1(L).X_t \eft L\in\Pic X\} = \min\{0\neq C.X_t \eft C\subset X \;\text{a curve}\},
		\end{equation*} 
		where the last equation follows as $c_1(L)+nX_t$ becomes effective for $n\gg 0$.
		\subsection{Compactified Jacobians} 
		\label{sub:compactifiedJacobian}
		Denote by $\mathrm{Jac}^d (X/\PP^1)\to\PP^1$ the relative Jacobian of the elliptic fibration. Then we can define the compactified Jacobian $J^d(X)\to\PP^1$ as the unique relatively minimal smooth model of $\mathrm{Jac}^d(X/\PP^1)\to\PP^1$. Therefore over the smooth fibers one recovers $J^d(X)_t \cong \mathrm{Jac}^d(X_t)$, where the latter is the usual Jacobian of a curve. By \cite[Prop. 11.4.5]{huybrechts2016lectures} all compactified Jacobians are K3 surfaces as well and moreover for every $n\in \NN$ we can find another elliptic K3 surface $Y\to\PP^1$ such that there is an isomorphism $J^n(Y) \cong X$ as elliptic surfaces. Moreover the index of $Y$ is exactly $nd_0$, where $d_0$ is the index of $X$.
		
		Furthermore Jacobians give rise to rational maps between elliptic K3 surfaces as follows: For a smooth fiber we have a canonical morphism 
		\begin{equation*}
			\mathrm{Jac}^m(X_t) \times \mathrm{Jac}^n(X_t)\to \mathrm{Jac}^{m+n}(X_t),
		\end{equation*}
		which is given by the tensor product of line bundles. This globalizes to give a rational map
		\begin{equation*}
			J^m(X) \times_{\PP^1} J^n(X)\dashrightarrow J^{m+n}(X),
		\end{equation*}
		which is defined over the smooth locus $U\subset\PP^1$. Using the diagonal morphism we can construct a multiplication map $J^1(X)\dashrightarrow J^n(X)$ for every $n\in \NN$ by mapping
		\begin{equation*}
            J^1(X)\to J^1(X)\times_{\PP^1} \ldots \times_{\PP^1} J^1(X) \dashrightarrow J^{n}(X),		
        \end{equation*}
        where the first map is the diagonal map into the $n$-fold fiberproduct.
		To relate these rational maps to the K3 surface $X$ we mention that the canonical isomorphism $X_t\cong \mathrm{Jac}^1(X_t)$ gives an \emph{isomorphism} $X\to J^1(X)$ respecting the fibration. Moreover choosing a line bundle $M\in \Pic X$ of degree $d_0$ we get another isomorphism 
		\begin{equation*}
			J^n(X) \to J^{n+d_0}(X),
		\end{equation*}
	 	which fiberwise is given by the tensor product with $M$, i.e.,
	 	\begin{equation*}
			L \mapsto L\otimes M|_{X_t}
	 	\end{equation*}
 		for a line bundle $L\in \mathrm{Jac}^n(X_t)$.
	
		\subsection{Framed elliptic curves}
		\label{sub:framedElliptic}
		We recall some standard facts on elliptic curves, see e.g., \cite{hain2014lectures}.
		\begin{definition}
		    A framed elliptic curve is a triple $(E,a,b)$ of a complex elliptic curve $E$ and two elements $a,b\in \hh_1(E,\ZZ)$ such that their intersection is $a\cdot b = 1$. Isomorphisms of framed elliptic curves are isomorphisms of elliptic curves that respect the frame.
		    
		    A framed lattice is a triple $(\Lambda, \lambda_1, \lambda_2)$ such that $\Lambda\subset \CC$ is a rank two lattice and $\lambda_1,\lambda_2\in\Lambda$ is a $\ZZ$-basis of $\Lambda$ with $\Im (\lambda_1/\lambda_2) > 0$. Two framed lattices are isomorphic if the lattice and the frame coincide up to a complex multiple.
		\end{definition}
		For example every family of elliptic curves $F\to B$ over a simply connected base $B$ can be simultaneously framed, i.e., there is a tuple $(a,b)$ in $\hh_1(F,\ZZ)$ such that the pushforward of the frames of every fiber coincide with $(a,b)$.
		
		There is a one-to-one bijection 
		\begin{equation*}
		    \HH \leftrightarrow \left\{\begin{gathered}
            \textit{isomorphism classes} \\
            \textit{of framed lattices}
        \end{gathered}\right\} \leftrightarrow\left\{\begin{gathered}
            \textit{isomorphism classes of} \\
            \textit{framed elliptic curves}
        \end{gathered}\right\}
		\end{equation*}
		which sends some $\tau \in \HH$ to $\Lambda_\tau = \ZZ\tau+\ZZ$ and a lattice $\Lambda$ to $\CC/\Lambda$. Moreover the upper half plane $\HH$ is a fine moduli space for framed elliptic curves with universal curve given by
		\begin{equation*}
		    \eE = \CC\times \HH / \{(\ZZ\tau+\ZZ, \tau)\eft \tau \in \HH\}.
		\end{equation*}
        For a chosen frame $(\Lambda,\lambda_1,\lambda_2)$ there is a natural choice of coordinate function
        \begin{align*}
            \RR^2 &\to \CC/\Lambda\\
            (x,y)&\to x\lambda_1+y\lambda_2, 
        \end{align*}
        which induces a homeomorphism $\RR^2/\ZZ^2 \cong \CC/\Lambda$.
        If we change the frame of $\Lambda$ by an element $\gamma\in \SL(2,\ZZ)$, the corresponding coordinates for $p = x\lambda_1+y\lambda_2$ in the new frame are given by $\gamma^T\cdot \left(\begin{smallmatrix}
            x\\y
        \end{smallmatrix}\right)$, where $\gamma^T$ is the transposed matrix.
        
		\subsection{Singular Fibers}
		\label{sub:singularFibers}
		The singular fibers of elliptic fibrations can be completly understood by means of their local monodromy group, for details see \cite[Lecture IV]{miranda1989basic}. 
		The latter is defined as follows. 
		Pick a small disc $\Delta\subset \PP^1$ such that over the punctured disc the map $X_{\Delta^*}\to\Delta^*$ is smooth and fix a fiber $X_t\cong \CC/(\ZZ+\tau\ZZ)$. 
		Then the usual monodromy action of $\ZZ \cong \pi_1(\Delta^*, t)$ on the first integral cohomology of $X_t$ gives rise to a subgroup $\Gamma\subset \SL(2,\ZZ)$ which is called the \emph{local monodromy group}. 
		
		We just recall the facts that are important to our case, for a complete classification see \cite[Diagram 11.1.3]{huybrechts2016lectures}.
		It turns out that the local monodromy is infinite precisely for the fibers of type $I_n, I_n^*\;(n>0)$, which occur on a K3 surface if and only if the fibration is non-isotrivial. In this case the local monodromy can be generated by the following elements
		\begin{equation*}
			I_n:\;\begin{pmatrix}
				1&n\\0&1
			\end{pmatrix}\qquad\qquad I_n^*:\;-\begin{pmatrix}
			1&n\\0&1
		\end{pmatrix}.
		\end{equation*} 
		
	\section{Quasi-torsion sections}
		\label{sec:QuasiTorsion}
		In the following we will introduce the main definition of this paper, which is a generalization of torsion multisections. 
		Recall the definition of the latter from \cite{bogomolov1999density}.
		\begin{definition}
		    Let $X\to\PP^1$ be an elliptic K3 surface. A multisection $M\subset X$ is called torsion if for any two points $x,y\in M\cap X_t$ in every smooth fiber $X_t$ their difference $x-y\in \mathrm{Jac}^0(X_t)$ is torsion.
		\end{definition}
		Throughout this section we will work in the analytic category unless otherwise stated. 
		
		Let $p\colon X\to\Delta$ a smooth elliptic Jacobian fibration between complex manifolds over a simply connected base $\Delta$. Then a choice of frame for the family yields a holomorphic $\tau\colon \Delta\to \HH= \{z\in \CC\eft \Im z > 0\}$ such that 
		\begin{equation*}
			\label{eq:LocalElliptic}
			X = \CC\times \Delta / (\ZZ\tau(t)+\ZZ,t)
		\end{equation*} 
		and the section is given by $\{0\}\times \Delta$. We call such a choice a \emph{standard model}.
		
		The branches of the $m$-torsion $X[m]$ are of the form $\{(a\tau(t)+b, t)\eft t\in \Delta\}$ for some $a,b\in \frac{1}{m}\ZZ\subset\QQ$. We generalize these multisections in the following way.
		\begin{definition}
			Let $X\to B$ be an elliptic Jacobian fibration between two complex manifolds such that the base $B$ is $1$-dimensional. A holomorphic curve $C\subset X$ is called \emph{elementary quasi-torsion}  if $C_U\to U$ is \'{e}tale and the branches over every simply connected $\Delta\subset U$ and some choice of standard model $X_\Delta = \CC\times \Delta / (\ZZ\tau(t)+\ZZ, t)$ are given by
			\begin{equation*}
				\{(a\tau(t)+b,t)\eft t\in\Delta\} \subset X_\Delta
			\end{equation*}
			for some $a,b\in \RR$ which may depend on $\Delta$ and the chosen standard model.
		\end{definition}
		\begin{remark}
		    The above definition is independent of the choice of standard model: If we have two standard models over $\Delta$ given by $\tau,\tau':\Delta\to \HH$, then $\tau' = \gamma \cdot \tau$ with $\gamma\in \SL(2,\ZZ)$. If we denote $\left(\begin{smallmatrix}
            a^\prime\\b^\prime
            \end{smallmatrix}\right) = (\gamma^{T})^{-1}\cdot \left(\begin{smallmatrix}
            a\\b
            \end{smallmatrix}\right)$ then the two curves
            \begin{align*}
                \{(a\tau(t)+b,t)\eft t\in\Delta\} &\subset \CC\times \Delta / (\ZZ\tau(t)+\ZZ,t)\\
                \{(a'\tau'(t)+b',t)\eft t\in\Delta\} &\subset \CC\times \Delta / (\ZZ\tau'(t)+\ZZ,t)
            \end{align*}
            coincide in $X_\Delta$. Moreover by the same reasoning it suffices to check the conditions only on an open cover of $U$.
		\end{remark}
		\begin{example}
			\label{ex:EQTIsotrivial}
			Let $p\colon X\to \PP^1$ be an isotrivial Jacobian elliptic projective surface with general fiber isomorphic to a fixed elliptic curve $E$. Then there exists a projective curve $C$ and a finite rational morphism
			\begin{equation*}
				\label{eq:RationalMapIsotrival}
				C\times E \dashrightarrow X
			\end{equation*}
			that respects the section and the elliptic structure. The closure of the image of $C\times \{pt\}$ under the rational map above defines an elementary quasi-torsion curve. In fact this is an example of an \emph{algebraic} elementary quasi-torsion curve.
		\end{example}
		\begin{lemma}
		    Let $X\to\PP^1$ be a Jacobian elliptic fibration and $x\in X_U$ a point. Then there exists a unique holomorphic connected elementary quasi-torsion curve inside $X_U$ that contains $x$.
		\end{lemma}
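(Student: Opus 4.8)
The plan is to realise every elementary quasi-torsion curve as a leaf of one natural foliation of $X_U$ by holomorphic curves, and to extract existence and uniqueness from that. Let $\pi\colon\tilde U\to U$ be the universal cover. Since $\tilde U$ is simply connected, the pulled back family $X_{\tilde U}\to\tilde U$ carries a frame, hence a standard model: a holomorphic $\tau\colon\tilde U\to\HH$ with $X_{\tilde U}=\CC\times\tilde U/(\ZZ\tau(\tilde t)+\ZZ,\tilde t)$ and section $\{0\}\times\tilde U$. The coordinate function of Section~\ref{sub:framedElliptic} globalises to a fibre-preserving diffeomorphism
\begin{equation*}
	\Psi\colon (\RR/\ZZ)^2\times\tilde U\ \xrightarrow{\sim}\ X_{\tilde U},\qquad \big((a,b),\tilde t\big)\mapsto\big(a\tau(\tilde t)+b,\ \tilde t\big),
\end{equation*}
covering the identity of $\tilde U$. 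Each slice $\Psi(\{(a,b)\}\times\tilde U)=\{(a\tau(\tilde t)+b,\tilde t)\}$ is the graph of the holomorphic function $\tilde t\mapsto a\tau(\tilde t)+b$, so it is a holomorphic curve in $X_{\tilde U}$ meeting every fibre transversally in one point; the slices constitute a foliation $\tilde{\mathcal F}$ of $X_{\tilde U}$ by holomorphic curves.

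Then I would descend $\tilde{\mathcal F}$ to $X_U$. The deck group $\pi_1(U)$ acts on $X_{\tilde U}$ with quotient $X_U$, and on the reference fibre it acts through the monodromy $\pi_1(U)\to\SL(2,\ZZ)$, which is \emph{linear} because the section, being defined over all of $U$, is monodromy-invariant. By the frame-change rule of Section~\ref{sub:framedElliptic}, in the trivialisation $\Psi$ this action is linear on the $(\RR/\ZZ)^2$-factor; in particular it permutes the slices of $\tilde{\mathcal F}$. Hence the partition of $X_{\tilde U}$ into slices descends to $X_U$, and over any evenly covered $\Delta\subset U$ it becomes, through $\Psi$, the product foliation; thus we obtain a foliation $\mathcal F$ of $X_U$ by holomorphic curves transverse to $X_U\to U$ (it is the suspension of the monodromy action on $X_{t_0}\cong(\RR/\ZZ)^2$). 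Over a simply connected $\Delta\subset U$, a lift $\Delta\hookrightarrow\tilde U$ identifies the leaves of $\mathcal F|_\Delta$ with the curves $\{(a\tau(t)+b,t)\}$ of the transported standard model, and for any other standard model over $\Delta$ the remark preceding the lemma shows the leaves still have this form. So the leaves of $\mathcal F$ are, locally, exactly the elementary quasi-torsion curves.

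For existence, let $L_x\subset X_U$ be the leaf of $\mathcal F$ through $x$; it is connected and a holomorphic curve. As $\mathcal F$ is transverse to the submersion $X_U\to U$, lifting paths along the leaves makes $p|_{L_x}\colon L_x\to U$ an unramified covering map; in particular it is \'etale, and over a simply connected $\Delta$ it is a trivial cover, i.e.\ a disjoint union of full graphs, each of them a leaf of $\mathcal F|_\Delta$ and hence of the shape required by the definition. Therefore $L_x$ is a connected elementary quasi-torsion curve through $x$. For uniqueness, let $C\ni x$ be any such curve. Near each of its points $C$ equals some $\{(a\tau(t)+b,t)\}$, a plaque of $\mathcal F$; so $C$ is open in a single leaf, which must be $L_x$. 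It is also closed in $L_x$: over a simply connected $\Delta$ the branches of $C$ are full graphs over $\Delta$, hence whole sheets of the covering $p|_{L_x}$, so any point of $L_x\setminus C$ has a sheet neighbourhood disjoint from $C$. Since $L_x$ is connected, $C=L_x$.

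The only genuine obstacle is the descent: one must be sure that going up to the universal cover and back does not destroy the local normal form, i.e.\ that the monodromy acts \emph{linearly} on the torus factor of $\Psi$ so as to permute the slices of $\tilde{\mathcal F}$ --- which is exactly what the transpose rule $\gamma\mapsto\gamma^T$ of Section~\ref{sub:framedElliptic}, together with the independence statement of the preceding remark, delivers. Everything else (leaves are holomorphic and cover the base, and a connected sub-covering of a connected covering is the whole covering) is routine covering-space theory.
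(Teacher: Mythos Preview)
Your proof is correct and follows the same construction as the paper: pass to the universal cover of $U$, take the slice $\{(a\tau(\tilde t)+b,\tilde t)\}$ through a lift of $x$, and push down to $X_U$; uniqueness is then a local-determination-plus-connectedness argument. Your foliation language is a tidy packaging of this idea and makes both the verification that the image is elementary quasi-torsion and the uniqueness step (open and closed in a single leaf) more explicit than the paper's terse appeal to \'etaleness and connectedness, but the underlying argument is the same.
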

	    \begin{proof}
	        Let $\Delta\subset U$ be a simply connected subset such that $x\in X_\Delta$ and let $X_\Delta\cong \CC\times \Delta/(\ZZ\tau(t)+\ZZ,t)$ be a standard model.
	        Then we can choose $(a,b)\in\RR^2$ such that $x = (a\tau(t_0)+b,t_0)$. Such a choice is unique up to $\ZZ^2$ and hence any branch of an elementary quasi-torsion curve that contains $x\in X_U$ is equal to
	        \begin{equation*}
	            \{(a\tau(t)+b)\eft t\in\Delta\}\subset X_\Delta.
	        \end{equation*}
	        Thus, the uniqueness follows from the curve being \'{e}tale and connected.
	        
	        To construct the curve we denote by $U'\to U$ the universal cover and by $X'$ the pullback of $X_U\to U$ to $U'$. If we choose a standard model 
		    \begin{equation*}
		        \CC\times U'/(\ZZ\tau(t)+\ZZ,t) \cong X'
		    \end{equation*}
		    we may choose a point $x'\in X'$ that lies over $x$ via the map $p\colon X' \to X_U$. Then we may choose $(a,b)\in \RR^2$ such that $x'$ lies in
		    \begin{equation*}
		        T'[x'] := \{(a\tau(t)+b, t)\eft t\in\Delta\}.
		    \end{equation*}
		    We then denote 
		    \begin{equation*}
		        T[x] := p(T'[x'])\subset X_U,
		    \end{equation*}
		    which is a connected elementary quasi-torsion curve containing $x\in X_U$. 
	    \end{proof}	
	    \begin{definition}
	        Let $X\to \PP^1$ be a Jacobian elliptic fibration. For any point $x\in X_U$ the unique holomorphic elementary quasi-torsion curve that contains $x$ is denoted $T[x]$.
	    \end{definition}

		As we have seen in Example~\ref{ex:EQTIsotrivial} in the isotrivial case every $T[x]$ is algebraic and hence extends to a curve on $X$. But as the construction above is very analytic in nature this is not guaranteed in any case. We will see that for non-isotrivial fibrations quite the opposite is true: only those $T[x]$ contained in $X[m]$ for some $m\in \NN$ extend to the whole of $X$. 
		\begin{remark}
			Let $x = a\tau+b \in X_{t_0} = \CC/(\ZZ\tau+\ZZ)$ be an element in a smooth fiber of $X\to \PP^1$. As $T[x]$ is \'{e}tale over $U$ there is a well defined action of $\pi_1(U, t_0)$ on $X_{t_0}$. This action factors through the monodromy group $\Gamma\subset \operatorname{SL}(2,\ZZ)$ by acting on the tuple $(a,b)$ by the right action induced by the transposed matrix.
		\end{remark}
		\begin{proposition}
			\label{prop:NoQuasiTorsion}
			Let $X\to \PP^1$ be a non-isotrivial elliptic projective Jacobian surface. Then for some $x\in X_U$ the holomorphic curve $T[x]\subset X_U$ extends to an algebraic curve on $X$ if and only if $T[x] \subset X[m]$ is torsion for some $m\in \NN$.
		\end{proposition}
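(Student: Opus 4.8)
The plan is to rephrase ``$T[x]$ extends to an algebraic curve'' as a finiteness statement for a monodromy orbit, and then to exploit that the monodromy group of a non-isotrivial fibration is large. One direction is immediate, and I would settle it first: if $T[x]\subset X[m]$, then since $X[m]_U\to U$ is finite étale and each of its connected components is, being connected and étale over the normal curve $U$, a curve of the form $T[x']$ by the preceding lemma, the connected curve $T[x]$ is one of those components; its closure in the projective surface $X$ is the corresponding (algebraic) component of $X[m]$, so $T[x]$ extends.

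For the converse, suppose $T[x]$ is contained in an algebraic curve $C\subset X$. Then $\overline{T[x]}$ is irreducible — it is the closure of $T[x]$, which is connected and étale over the normal base $U$, hence irreducible — and $1$-dimensional and dominates $\PP^1$, so $\overline{T[x]}\to\PP^1$ is finite and $T[x]\to U$ is a finite étale cover, say of degree $n$. Fix $t_0\in U$, a frame, and the resulting flat coordinates $X_{t_0}\cong\RR^2/\ZZ^2$ from Section~\ref{sub:framedElliptic}. The $n$ branches of the elementary quasi-torsion curve $T[x]$ over a simply connected neighbourhood of $t_0$ then read $a_j\tau+b_j$, and the monodromy of $X_U\to U$ acts on $X_{t_0}=\RR^2/\ZZ^2$ through a group $\Gamma\subset\SL(2,\ZZ)$ (the monodromy group of Section~\ref{sub:singularFibers}, up to the usual dual identifications), permuting these $n$ branches transitively by connectedness. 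Hence the class $w=(a,b)\bmod\ZZ^2$ of one branch has a finite $\Gamma$-orbit, so $\mathrm{Stab}_\Gamma(w)$ has finite index in $\Gamma$; the goal is to conclude $(a,b)\in\QQ^2$, i.e.\ that $T[x]$ is torsion.

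The crucial input — and the step I expect to be the main obstacle — is that $\Gamma$ is non-elementary in the concrete sense that it contains two parabolic elements with distinct fixed lines. First, non-isotriviality makes the $j$-map non-constant, hence it has a pole and $X\to\PP^1$ has a fibre of type $I_n$ or $I_n^*$ with $n>0$; by Section~\ref{sub:singularFibers} the corresponding local monodromy group is infinite and generated by a parabolic up to sign, so $\Gamma$ is infinite and contains a non-trivial parabolic $P$. Second, $\Gamma$ preserves no rational line $L\subset H^1(X_{t_0},\QQ)$: if it did, then (the action on $L$ being by $\pm1$ by integrality) a subgroup of index $\le 2$ would fix a non-zero class $v\in H^1(X_{t_0},\QQ)$, which by the theorem of the fixed part would lie in a sub-Hodge structure of the weight-one structure $H^1(X_{t_0},\QQ)$; being non-zero this sub-structure must have rank $2$ — a weight-one Hodge structure cannot have odd rank — forcing $\Gamma$ finite, a contradiction. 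Applying this non-invariance to the fixed line of $P$ produces $g\in\Gamma$ such that $gPg^{-1}$ is a parabolic whose fixed line differs from that of $P$.

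It remains to combine these facts. Since $\mathrm{Stab}_\Gamma(w)$ has finite index, it contains positive powers $Q_1=P^{k_1}$ and $Q_2=(gPg^{-1})^{k_2}$, still parabolic, with distinct fixed lines $L_1\ne L_2$. For a parabolic $Q$ fixing a rational line $L$, the endomorphism $Q-\mathrm{id}$ is nilpotent of rank one with kernel $L$, so ``$Q$ fixes $w$'' amounts to $(Q-\mathrm{id})(a,b)\in\ZZ^2$, i.e.\ $\ell_Q(a,b)\in\ZZ$ for a non-zero rational linear functional $\ell_Q$ with $\ker\ell_Q=L$. As $L_1\ne L_2$, the functionals $\ell_{Q_1}$ and $\ell_{Q_2}$ have distinct kernels, hence together define a $\QQ$-linear isomorphism $\RR^2\to\RR^2$ sending $(a,b)$ into $\ZZ^2$; therefore $(a,b)\in\QQ^2$, and a common denominator $m$ gives $T[x]\subset X[m]$. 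Everything past the non-elementariness of $\Gamma$ — where non-isotriviality enters essentially, through the theorem of the fixed part — is bookkeeping with the $\SL(2,\ZZ)$-action on flat coordinates.
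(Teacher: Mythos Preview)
Your argument is correct and takes a genuinely different route to the key ``largeness'' of the monodromy $\Gamma$. The paper proceeds in two asymmetric steps: first, the parabolic $\left(\begin{smallmatrix}1&2nN\\0&1\end{smallmatrix}\right)$ coming from an $I_N$ or $I_N^*$ fibre forces $a\in\QQ$ by finiteness of the orbit; second, Hassett's lemma that $\Gamma$ surjects onto $\SL(2,\ZZ/p\ZZ)$ for all primes $p\gg0$ supplies an explicit matrix $\left(\begin{smallmatrix} pw & 1+px \\ -1+py & pz \end{smallmatrix}\right)\in\Gamma$, and applying it together with the first step forces $b\in\QQ$. You instead argue symmetrically: Deligne's theorem of the fixed part shows $\Gamma$ preserves no rational line, so conjugating the one parabolic you already have yields a second with a different fixed direction; positive powers of both lie in the finite-index stabiliser of $(a,b)\bmod\ZZ^2$, and the two resulting independent rational linear constraints pin down $(a,b)\in\QQ^2$ at once. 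Your approach trades the citation of Hassett's lemma for a standard Hodge-theoretic input and gives a cleaner, coordinate-free endgame; the paper's approach is more elementary once Hassett's lemma is granted and stays closer to explicit matrix manipulation.
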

		\noindent The main idea of the proof is to show that $|T[x]\cap X_t|=\infty$ for non-torsion points $x\in X_U$. This can be seen as an analogue of the fact that the torsion $X[p]$ without the zero-section is irreducible for $p$ a large prime and $X[p].X_t = p^2$, see e.g., \cite[Theorem 8.3]{hassett2003potential}
		
		To deduce the above statement we make use of the monodromy action, which can be characterized by the following lemma.
		\begin{lemma}[{Hassett \cite[Lemma 8.4, Lemma 8.5]{hassett2003potential}}]
			Let $X\to \PP^1$ be a projective non-isotrivial Jacobian elliptic surface. Then the reduction $\Gamma\subset \SL(2,\ZZ) \to \SL(2,\ZZ/p\ZZ)$ of the monodromy group is surjective for primes $p\gg 0$.
		\end{lemma}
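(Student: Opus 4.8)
The plan is to combine Dickson's classification of subgroups of $\operatorname{PSL}(2,\ZZ/p\ZZ)$ with the presence of a parabolic element in $\Gamma$. First I would note that non-isotriviality forces the $j$-map $\PP^1\to\PP^1$ to be non-constant, hence surjective, and in particular to have a pole; by the classification recalled in Section~\ref{sub:singularFibers} the fibre over such a point has Kodaira type $I_n$ or $I_n^*$ with $n>0$, whose local monodromy — and therefore $\Gamma$ itself — contains an element conjugate in $\SL(2,\ZZ)$ to $\pm\bigl(\begin{smallmatrix}1&n\\0&1\end{smallmatrix}\bigr)$. After squaring if necessary, $\Gamma$ contains a non-trivial unipotent element $u\in\SL(2,\ZZ)$; write $L_u\subset\QQ^2$ for the unique line it fixes.

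Next, fix a prime $p\gg 0$ and let $\overline{\Gamma}_p\subset\SL(2,\ZZ/p\ZZ)$ be the reduction of $\Gamma$. For $p$ large the reduction $\overline{u}$ is still a non-trivial unipotent, hence of order exactly $p$, so its image in $\operatorname{PSL}(2,\ZZ/p\ZZ)$ has order $p$. By Dickson's theorem, a subgroup of $\operatorname{PSL}(2,\ZZ/p\ZZ)$ that contains an element of order $p$ is either contained in a Borel subgroup (the normaliser of a $p$-Sylow) or is all of $\operatorname{PSL}(2,\ZZ/p\ZZ)$. In the second case $\overline{\Gamma}_p$ surjects onto $\operatorname{PSL}(2,\ZZ/p\ZZ)$, so $\overline{\Gamma}_p\cdot\{\pm I\}=\SL(2,\ZZ/p\ZZ)$ and hence $[\SL(2,\ZZ/p\ZZ):\overline{\Gamma}_p]\le 2$; since $\SL(2,\ZZ/p\ZZ)$ is perfect for $p\ge 5$ it has no index-two subgroup, so $\overline{\Gamma}_p=\SL(2,\ZZ/p\ZZ)$, which is the assertion.

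It then remains to exclude the Borel alternative for all large $p$, and here I would argue by contradiction: suppose $\overline{\Gamma}_p$ lies in a Borel subgroup for infinitely many $p$. As a Borel stabilises a unique line and $\overline{u}$ stabilises only the reduction of $L_u$, that Borel must be the stabiliser of the reduction of $L_u$; hence every $\gamma\in\Gamma$ preserves $L_u$ modulo $p$ for infinitely many $p$, and therefore preserves $L_u$ over $\QQ$. Since the monodromy group is only defined up to conjugacy in $\SL(2,\ZZ)$, I may replace $\Gamma$ by a conjugate and assume $L_u=\QQ e_1$, so that $\Gamma$ is upper-triangular, whence $\Gamma\subseteq\{\pm\bigl(\begin{smallmatrix}1&m\\0&1\end{smallmatrix}\bigr):m\in\ZZ\}$, a group acting on the period domain $\HH$ only by the translations $\tau\mapsto\tau+m$. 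Consequently the period map of $X_U\to U$ descends to a single-valued holomorphic function $q=e^{2\pi i\tau}$ on $U$; the only singular fibres compatible with upper-triangular monodromy are those of type $I_n$, $I_n^*$ $(n\ge 1)$ and $I_0^*$, near all of which $q$ extends holomorphically (with a zero in the first two cases), so $q$ extends to a holomorphic function $\PP^1\to\CC$ and is therefore constant. Then $j=j(\tau)$ is constant, contradicting non-isotriviality, and this rules out the Borel case.

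The step I expect to be the crux is this last one, namely showing that upper-triangular integral monodromy forces isotriviality: it requires a careful local analysis of the limiting behaviour of the period map at each Kodaira fibre type. (Alternatively one could invoke strong approximation for thin subgroups of $\SL_2$, which reduces the lemma to the Zariski-density of $\Gamma$ in $\SL_2$; Zariski-density can fail only when $\Gamma$ is virtually solvable, which the same period-map argument excludes.)
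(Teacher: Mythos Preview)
The paper does not give its own proof of this lemma; it is simply quoted from Hassett \cite[Lemmas 8.4, 8.5]{hassett2003potential}, so there is nothing in the present paper to compare your argument against directly.

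That said, your argument is correct and is essentially the classical one (and close in spirit to Hassett's). The two key inputs are exactly the ones you identify: (i) non-isotriviality forces a pole of $j$ and hence a fibre of type $I_n$ or $I_n^*$ with $n>0$, so $\Gamma$ contains a nontrivial unipotent; and (ii) Dickson's classification of subgroups of $\operatorname{PSL}(2,\FF_p)$ leaves only the Borel alternative to exclude. Your exclusion of the Borel case is clean: if $\overline{\Gamma}_p$ lies in a Borel for infinitely many $p$ then $\Gamma$ fixes the rational line $L_u$, hence after conjugation $\Gamma\subset\bigl\{\pm\bigl(\begin{smallmatrix}1&m\\0&1\end{smallmatrix}\bigr)\bigr\}$; all local monodromies then have trace $\pm2$, ruling out the finite-order Kodaira types, and the single-valued $q=e^{2\pi i\tau}$ extends across the remaining $I_n$, $I_n^*$, $I_0^*$ fibres to a bounded holomorphic function on $\PP^1$, hence is constant, contradicting non-isotriviality. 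One small point worth making explicit: at an $I_0^*$ fibre the monodromy $-I$ acts trivially on $\HH$, so $\tau$ is already single-valued and bounded there, whence $q$ extends by Riemann's removable singularity theorem; you assert this but it is the one place a reader might pause.

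Your parenthetical alternative via strong approximation is also valid, but note that strong approximation for $\SL_2$ requires Zariski-density of $\Gamma$, and establishing that still comes down to the same period-map argument excluding the virtually unipotent case --- so it is not really a shortcut here.
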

		\begin{proof}[Proof of Proposition \ref{prop:NoQuasiTorsion}]
			Suppose $T[x]$ extends on $X$, i.e., it is algebraic. In particular $|T[x]\cap X_t|$ is finite. As $X$ is non-isotrivial there is a degenerate fiber of type $I_N$ or $I_N^*$. By fixing an appropriate smooth fiber $X_t = \CC/(\ZZ\tau+\ZZ)$ we can assume that 
			\begin{equation*}
				\gamma_n =\begin{pmatrix}
					1&2nN\\0&1
				\end{pmatrix} \in \Gamma
			\end{equation*}
			is contained in the monodromy group $\Gamma$ for every $n\in \NN$. Let $x = a\tau+b \in X_t\cap T[x]$. Then applying $\gamma_n$ yields
			\begin{equation*}
			    \gamma_n^T.\begin{pmatrix}
					a\\b
				\end{pmatrix} = \begin{pmatrix}
					a\\2anN+b
				\end{pmatrix}.
			\end{equation*}
			As the intersection of $T[x]$ with $X_t$ is finite $2anN+b = b\in \RR/\ZZ$ for some $n\in\NN_{>0}$. Therefore we have that $a\in \QQ$ is rational. On the other hand choose $p\gg 0$ such that the previous lemma is fulfilled. Then the matrix
			\begin{equation*}
				\begin{pmatrix}
					pw& 1+px\\
					-1+py& pz
				\end{pmatrix}\in \Gamma
			\end{equation*}
			is contained in the monodromy group for some $w,x,y,z\in \ZZ$. This yields
			\begin{equation*}
			    \begin{pmatrix}
			        pwa+(-1+py)b\\
			        (1+px)a+pzb
			    \end{pmatrix}\in T[x]\cap X_t,
			\end{equation*}
			which then implies that $pwa+(-1+py)b\in\QQ$ is rational as above and hence $b\in\QQ$ is rational as well. 
		\end{proof}

		We will now give a local criterion for a holomorphic curve to be elementary quasi-torsion.
		\begin{proposition}
			\label{prop:CapTorsionEmpty}
			Let $X\to\Delta$ be a standard model and let $I\subset \NN$ be an infinite multiplicatively closed subset. Assume that a section $C\subset X$ of $X\to\Delta$ satisfies
			\begin{equation*}
				C\cap \bigcup_{n\in I} X[n] = \emptyset.
			\end{equation*}
			Then $C$ is elementary quasi-torsion.
		\end{proposition}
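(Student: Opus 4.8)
The plan is to pass to the universal cover of $X$, write the section in real ``lattice coordinates'', and show that the hypothesis forces those coordinates to be constant; the one genuinely holomorphic ingredient will be a short Jacobian computation.

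\emph{Setting up coordinates.} Since $\Delta$ is simply connected, $\CC\times\Delta\to X=\CC\times\Delta/(\ZZ\tau(t)+\ZZ,t)$ is the universal cover, so the section $C$ lifts to a holomorphic section $t\mapsto(\phi(t),t)$ for some holomorphic $\phi\colon\Delta\to\CC$. Because $\tau(t)\in\HH$, the pair $1,\tau(t)$ is an $\RR$-basis of $\CC$ for every $t$, so there are unique real-analytic functions $a,b\colon\Delta\to\RR$ with $\phi(t)=a(t)\tau(t)+b(t)$; explicitly $a=\operatorname{Im}\phi/\operatorname{Im}\tau$ and $b=\operatorname{Re}\phi-a\operatorname{Re}\tau$. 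By definition $C$ is elementary quasi-torsion once $a$ and $b$ are constant (the étaleness demanded in the definition being automatic for a section over the smooth base $\Delta$), so it suffices to prove this. Recalling that the branches of $X[n]$ over $\Delta$ are exactly the curves $\{(a'\tau(t)+b',t)\eft t\in\Delta\}$ with $a',b'\in\tfrac1n\ZZ$, the assumption $C\cap\bigcup_{n\in I}X[n]=\emptyset$ says precisely that the image of the real-analytic map $(a,b)\colon\Delta\to\RR^2$ avoids $\bigcup_{n\in I}\tfrac1n\ZZ^2$. Since $I$ is infinite and multiplicatively closed, it contains some $n_0\ge2$ together with all of its powers $n_0^k$, so this set contains $\bigcup_k\tfrac1{n_0^k}\ZZ^2$, which is dense in $\RR^2$; hence the image of $(a,b)$ has empty interior.

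\emph{Forcing constancy.} A $C^1$ map between real $2$-manifolds whose image has empty interior must have identically vanishing Jacobian determinant --- otherwise the inverse function theorem would produce an open piece of the image. So it remains to compute $\det D(a,b)$ and check that it vanishes only where $a$ is critical. Differentiating $\phi=a\tau+b$ with respect to $\bar t$ and using $\partial_{\bar t}\phi=\partial_{\bar t}\tau=0$ together with $a,b\in\RR$ gives $\overline{\partial_t a}\cdot\tau+\overline{\partial_t b}=0$, i.e.\ $\partial_t b=-\bar\tau\,\partial_t a$. Substituting this into the elementary identity $\det D(a,b)=4\operatorname{Im}\bigl(\partial_t a\cdot\overline{\partial_t b}\bigr)$, valid for any real-valued $C^1$ functions $a,b$ of one complex variable, yields $\det D(a,b)=-4\,|\partial_t a|^2\,\operatorname{Im}\tau$. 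As $\operatorname{Im}\tau>0$ on $\Delta$, the identical vanishing of the Jacobian forces $\partial_t a\equiv0$, hence (as $a$ is real-valued) $da\equiv0$, so $a$ is constant on the connected manifold $\Delta$; then $\partial_t b=-\bar\tau\,\partial_t a\equiv0$ shows that $b$ is constant as well. Thus $\phi(t)=a\tau(t)+b$ with $a,b\in\RR$ constant, i.e.\ $C$ is elementary quasi-torsion.

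The crux is the identity $\det D(a,b)=-4|\partial_t a|^2\operatorname{Im}\tau$: the density argument by itself only shows that $(a,b)$ has rank $\le1$ everywhere, which for a general smooth map falls well short of constancy, and it is precisely the holomorphicity of $\phi$ --- encoded in $\partial_t b=-\bar\tau\,\partial_t a$ --- that upgrades this to $\partial_t a\equiv0$. It is also worth flagging that the density of $\bigcup_{n\in I}\tfrac1n\ZZ^2$ is the one place where the hypothesis that $I$ be multiplicatively closed, rather than merely infinite, is genuinely used.
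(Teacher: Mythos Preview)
Your proof is correct and takes a genuinely different route from the paper's. Both begin identically---lift to a holomorphic $\phi\colon\Delta\to\CC$ and write $\phi=a\tau+b$ with real-valued $a,b$---but diverge in extracting constancy. The paper argues by contradiction: if (say) $b$ is non-constant, density in $\RR$ gives a $t_0$ with $b(t_0)=b_0\in\tfrac1n\ZZ$ for some $n\in I$; the implicit function theorem applied to $F(z,t)=\phi(t)-z\tau(t)-b_0$ then produces a \emph{holomorphic} $g$ with $\phi=g\tau+b_0$ near $t_0$, and the open mapping theorem for $g$ furnishes a second value $a_0=g(t')\in\tfrac1m\ZZ$, exhibiting an $nm$-torsion point on $C$. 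Your approach is more global: the map $(a,b)\colon\Delta\to\RR^2$ avoids a dense set, hence has identically vanishing Jacobian, and the single identity $\det D(a,b)=-4\,|\partial_t a|^2\operatorname{Im}\tau$---which packages the Cauchy--Riemann equations for $\phi$ and $\tau$---converts this directly into $\partial_t a\equiv0$. This cleanly separates the topological input (density) from the holomorphic input (the Jacobian formula) and avoids the two-step contradiction; the paper's version is more hands-on but uses only the open mapping and implicit function theorems rather than a Wirtinger-calculus computation.

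One correction to your closing commentary: the density of $\bigcup_{n\in I}\tfrac1n\ZZ^2$ in $\RR^2$ already follows from $I\subset\NN$ being merely \emph{infinite} (hence unbounded)---for any $\varepsilon>0$ pick $n\in I$ with $n>1/\varepsilon$ and note that $\tfrac1n\ZZ^2$ is $\varepsilon$-dense. So your argument in fact proves the proposition under this weaker hypothesis; it is the paper's proof, not yours, that genuinely uses multiplicative closure (to guarantee $nm\in I$ at the final step).
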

		\begin{proof}
			Let 
			\begin{equation*}
			    X' = \CC\times \Delta \to (\CC\times\Delta)/(\ZZ\tau(t)+\ZZ,t)= X
			\end{equation*} 
			be the universal cover of the standard model. As $\Delta$ is simply connected the section $C$ lifts to a section $C'$ of $X'\to \Delta$. By assumption 
			\begin{equation*}
				C' \subset (\CC\times \Delta) \setminus \bigcup_{n\in I} (\tfrac{1}{n}\ZZ\tau(t)+\tfrac{1}{n}\ZZ, t)
			\end{equation*}
			for the infinite multiplicatively closed set $I$.
			Denote by $f\colon  \Delta\to \CC$ a function that induces a chart for the curve $C'\subset \CC\times \Delta$, i.e., $C' = \{(f(t), t)\eft t\in\Delta\}$. Then $f(t) = a(t)\tau(t)+b(t)$ for some continuous real valued functions $a,b\colon \Delta\to \RR$.
			
			We will now use the fact that $\bigcup_{n\in I} X[n]$ is dense in $X$ to show that $a(t)$ and $b(t)$ are constant. 
			
			By contradiction assume that this is not the case,  i.e., without loss of generality $b$ is non-constant and therefore there is some $t_0$ such that $b_0 = b(t_0)\in \tfrac{1}{n}\ZZ$ for some $n\in I$. Then the function
			\begin{equation*}
				F\colon  \CC\times \Delta \to \CC,\; F(z,t) = f(t)- z\tau(t)-b_0
			\end{equation*}
			has a zero at $(a(t_0), t_0)$ and a Jacobian of maximal rank. The implicit function theorem gives an open $t_0\in U\subset \Delta$ and a holomorphic function $g\colon U\to \CC$ such that $f(t) - g(t)\tau(t)-b_0 = 0$ for all $t\in U$. If $g$ is constant we are done, so otherwise the image is open. As $a(t_0)\in \RR$ is contained in the image of $g$ there is an $a_0 = g(t')\in \tfrac{1}{m}\ZZ$ also contained in the image for $m\in I$ large enough. Therefore the point 
			\begin{equation*}
			    (f(t'),t')=(a_0\tau(t') + b_0,t') \in X[nm]\cap C
			\end{equation*} 
			is torsion, a contradiction.
		\end{proof}
		\begin{corollary}
			\label{cor:TorsionDense}
			Let $p\colon X\to \PP^1$ be a Jacobian elliptic fibration and $C\subset X$ an irreducible holomorphic curve that is not elementary quasi-torsion. Then the set 
			\begin{equation*}
			    C\cap \bigcup_{n\in I} X[n] \subset C
			\end{equation*}
			is dense. 
		\end{corollary}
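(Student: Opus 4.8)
The plan is to argue by contraposition: supposing $C\cap\bigcup_{n\in I}X[n]$ is not dense in $C$, I would deduce that $C$ is elementary quasi-torsion.

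\emph{Degenerate cases and reduction to a disc.} First I would observe that $p|_C\colon C\to\PP^1$ cannot be constant. Indeed, if its image were a single point $t\in U$ then $C=X_t$, and on a smooth fibre $\bigcup_{n\in I}X[n]$ cuts out the dense set of $I$-torsion points, contradicting our assumption; and if its image were a point $t\notin U$ then $C$ would be a component of a singular fibre, so $C_U=\emptyset$ and $C$ would be (vacuously) elementary quasi-torsion, contradicting the hypothesis. Hence $p|_C$ is finite, so the sets $C\setminus X_U$, $\mathrm{Sing}(C)$, and the image in $C$ of the ramification locus of $\tilde C\to\PP^1$ are finite. Choosing a nonempty open $W_0\subset C$ disjoint from $C\cap\bigcup_{n\in I}X[n]$, deleting these finitely many points, and restricting to a connected coordinate patch, I obtain a nonempty open $W\subset W_0$ that maps biholomorphically onto a simply connected $\Delta\subset U$; after fixing a standard model $X_\Delta\cong\CC\times\Delta/(\ZZ\tau(t)+\ZZ,t)$, the set $W$ is a section of $X_\Delta\to\Delta$ disjoint from $\bigcup_{n\in I}X_\Delta[n]$. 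Proposition \ref{prop:CapTorsionEmpty} then gives $W=\{(a\tau(t)+b,t)\mid t\in\Delta\}$ for some $a,b\in\RR$.

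\emph{Propagation --- the main obstacle.} The real work is to upgrade this one branch to a statement about all of $C$. I would let $G\subset U$ be the set of points admitting a neighbourhood over which every branch of $C_U$ has the form $\{(a'\tau'(t)+b',t)\}$ for some standard model and real $a',b'$. By construction $G$ is open and contains $\Delta$; I would show it is closed, hence all of $U$. Given $t_0$ in its closure, take a disc $\Delta_0\ni t_0$ inside $U$, fix a standard model over it (possible as $\Delta_0$ is simply connected), and a smaller disc $\Delta_1\subset\Delta_0\cap G$. Over $\Delta_1$ every branch of $C_U$ has the above shape, and since $1$ and $\tau(t)$ are $\RR$-linearly independent two such sections are either equal or disjoint, so each is open in $C_U$ and each extends to a smooth section $Z=\{(a'\tau(t)+b',t)\mid t\in\Delta_0\}$ of $X_{\Delta_0}$. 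As $C_U$ is closed analytic and contains a nonempty open subset of the irreducible curve $Z$, it contains $Z$; a short argument with irreducible components then shows $C_U$ coincides near $t_0$ with the union of these $Z$'s, so $t_0\in G$. Once $G=U$, no two of these sections can pass through a common point, so $C_U\to U$ is étale, and over any simply connected $\Delta'\subset U$ a single standard model makes the locally constant real parameters glue, exhibiting every branch in the required form; by the remark that it suffices to verify the defining conditions on an open cover of $U$, this means $C$ is elementary quasi-torsion --- the desired contradiction.

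\emph{Why this is the hard part.} A priori a branch of $C_U$ over a disc is merely an analytic curve, possibly singular or ramified over the base, so the content is that as soon as it agrees with an elementary quasi-torsion branch on an open set the identity theorem pins it down; here one crucially uses that replacing the standard model by its image under $\SL(2,\ZZ)$ preserves the shape $\{(a\tau(t)+b,t)\}$ (the remark following the definition of elementary quasi-torsion), so the local descriptions on overlaps are genuinely compatible. The open-and-closed set $G$ is just a convenient way to organise this analytic continuation.
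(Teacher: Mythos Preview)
Your reduction to a disc and the invocation of Proposition~\ref{prop:CapTorsionEmpty} are exactly what the paper does. The divergence --- and the gap --- lies in the propagation step. You define $G\subset U$ as the locus over which \emph{every} branch of $C_U$ has the elementary quasi-torsion shape, and then assert that $\Delta\subset G$. But the work up to that point only shows that the \emph{single} branch $W$ over $\Delta$ has the required form; when $C.X_t>1$ there are other branches of $C_U$ over $\Delta$, and nothing you have established prevents those from meeting the torsion locus or from failing to be of the form $\{(a'\tau+b',t)\}$. So $G$ is not known to be nonempty, and your open--closed argument never starts.

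The paper sidesteps this by invoking the curve $T[x]$ already constructed in the lemma preceding the corollary. Once Proposition~\ref{prop:CapTorsionEmpty} yields a local branch of the form $\{(a\tau(t)+b,t)\}$, one simply observes that $C_U$ and $T[x]$ (for any $x\in W$) are irreducible holomorphic curves in $X_U$ agreeing on the nonempty open set $W$, hence coincide everywhere; so $C$ is elementary quasi-torsion, the desired contradiction. The existence and uniqueness of $T[x]$ already packages the analytic continuation you are attempting by hand, and --- crucially --- it handles all branches of $C_U$ at once via the irreducibility of $C$, which is exactly the ingredient missing from your argument. Your proof can be repaired along these lines (show $T[x]\subset C_U$ by an open--closed argument in the connected set $T[x]$, then use finiteness of $C_U\to U$ to conclude $T[x]=C_U$), but that repair is precisely the paper's one-line appeal to $T[x]$.
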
	
		\begin{proof}
		    Let $V\subset C$ be an open set. By shrinking we may assume it to be simply connected and open. If $V\cap \bigcup_{n\in I} X[n] = \emptyset$ then for $\Delta = p(V)$ the set $V$ is an elementary quasi-torsion curve in $X_\Delta\to\Delta$ by the previous proposition. Hence, $C$ agrees with $T[x]$ on the open set $V$ for some $x\in X_U$ and thus they are equal everywhere.
		\end{proof}

		We now come to the main definition, which is a generalization of torsion multisections. Let $X\to \PP^1$ be a (not necessarily Jacobian) elliptic K3 surface. Then there is the rational difference map to the compactified Jacobian $J^0(X)$:
		\begin{equation*}
			\label{eq:DifferenceMap}
			d\colon  X_U\times_{U} X_U \cong J^1(X)_U \times_{U} J^1(X)_U \to J^0(X),
		\end{equation*}
		where the last arrow maps two line bundles $L,L'\in \Pic X_t$ to $L^{-1}\otimes L'$. 
		\begin{definition}
			Let $C\subset X$ be an irreducible holomorphic curve not contained in a fiber. We define $D(C) = d(C_U\times_{U} C_U)$ and say that $C$ is a \emph{quasi-torsion} multisection if 
			\begin{equation*}
				D(C) = \bigcup_{x\in S} T[x],
			\end{equation*}
			for some finite subset $S\subset X_U$. Otherwise it is \emph{non-quasi-torsion}.	
		\end{definition}
		
		\begin{remark}
		    Every torsion multisection is quasi-torsion as well as all elementary quasi-torsion curves. 
		\end{remark}
		    
    \section{Existence of rational non-quasi-torsion curves}
        \label{sec:ExistenceRNQT}
        In this section we will prove that there are rational non-quasi-torsion curves on elliptic K3 surfaces, as long as we allow a change of the fibration. The proof will be split into two parts as we have to take care of the isotrivial case seperately.
        
        We will introduce some notation which is taken from \cite[Corollary 9.5]{hassett2003potential} applied to the isotrivial case. If $X\to\PP^1$ is an isotrivial K3 surface with $n_0$ (resp. $n_2, n_3$ and $n_4$) fibers of type $I_0^*$ (resp. type $II, II^*$, type $III$,$III^*$ and type $IV$,$IV^*$), we denote 
        \begin{equation*}
            c(X\to\PP^1) = \tfrac{1}{2}n_0 + \tfrac{5}{6}n_2 + \tfrac{3}{4}n_3 + \tfrac{2}{3}n_4 -2.
        \end{equation*}
        The goal of this section is to prove the following theorem.
        \begin{theorem}
            \label{thm:NQTCurvesExist}
            Let $p\colon X\to\PP^1$ be an elliptic K3 surface. If $X\xrightarrow{p}\PP^1$ is non-isotrivial or isotrivial with $c(X\xrightarrow{p}\PP^1)>0$, then there is a non-quasi-torsion rational curve on $X$. If $p\colon X\to\PP^1$ is isotrivial  with $c(X\xrightarrow{p}\PP^1)\le0$ there is another elliptic fibration $p'\colon X\to\PP^1$ such that the previous conditions hold.
        \end{theorem}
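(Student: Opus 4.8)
The plan is to produce a rational curve $R \subset X$ and show it is non-quasi-torsion by a dimension count on $D(R) = d(R_U \times_U R_U) \subset J^0(X)$. Observe that $D(R)$ is always a union of irreducible curves in $J^0(X)$ (possibly together with the zero-section), since $R_U \times_U R_U \to U$ is a finite map of relative dimension $1$ and $d$ is fiberwise a morphism. So $R$ fails to be quasi-torsion exactly when at least one irreducible component $C$ of $D(R)$ is not elementary quasi-torsion. By Proposition~\ref{prop:NoQuasiTorsion} applied to $J^0(X) \to \PP^1$ (which is Jacobian and, in the non-isotrivial case, non-isotrivial), such a component $C$ — being algebraic — is elementary quasi-torsion if and only if $C \subset J^0(X)[m]$ for some $m$. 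Hence the strategy is: find $R$ such that $D(R)$ contains a component not lying in any $J^0(X)[m]$, equivalently (by the torsion-density of Corollary~\ref{cor:TorsionDense}) a component $C$ with $D(R) \cap \bigcup_{n \in I} J^0(X)[n]$ not Zariski-dense in $C$.

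For the \emph{non-isotrivial} case, I would start from \emph{any} rational curve $R \subset X$ with $R \cdot X_t = r \geq 1$ — one exists by Chen--Gounelas--Liedtke, or more elementarily one can take an irreducible rational curve in a suitable linear system. If every component of $D(R)$ were contained in some $J^0(X)[m]$, then $R$ would be a torsion multisection (after passing to $J^1(X) \cong X$ and translating); the point is that the construction behind Theorem~\ref{thm:MainTheorem1} already hands us, or lets us build, a rational curve whose fiberwise point-differences are not all torsion. Concretely: if the given $R$ is itself not a torsion multisection we are done; if $R$ \emph{is} torsion, then pass through a Jacobian $J^n(Y) \cong X$ and use the multiplication maps $J^1 \dashrightarrow J^n$ from Section~\ref{sub:compactifiedJacobian} — these spread out a torsion section into a multisection whose difference locus acquires a non-torsion component, because the degree grows while the torsion order is bounded. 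I expect this dichotomy to be packaged cleanly using the explicit monodromy generators $\binom{1\ 2nN}{0\ 1}$ at an $I_N$ or $I_N^*$ fiber exactly as in the proof of Proposition~\ref{prop:NoQuasiTorsion}: rationality of the "slopes" $(a,b)$ of $D(R)$ in a standard model forces torsion, so a single irrational slope suffices.

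For the \emph{isotrivial} case with $c(X \xrightarrow{p} \PP^1) > 0$, I would invoke Hassett's Corollary~9.5 (the source of the invariant $c$): the sign of $c$ governs the existence of a non-torsion rational multisection on the isotrivial elliptic surface via the associated base curve $C$ and the rational map $C \times E \dashrightarrow X$ of Example~\ref{ex:EQTIsotrivial}. When $c > 0$ the relevant moduli/Hurwitz space is positive-dimensional, so one can choose the multisection to have non-constant "horizontal" behaviour, i.e.\ its difference locus is not a union of $T[x]$'s; here the $T[x]$ are the algebraic elementary quasi-torsion curves of Example~\ref{ex:EQTIsotrivial}, and non-quasi-torsion means precisely escaping that finite union. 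When $c \leq 0$, the strategy is to \emph{replace the fibration}: an isotrivial K3 with $c \leq 0$ has very restricted singular fibers, and one can exhibit a different elliptic pencil $p' \colon X \to \PP^1$ — e.g.\ obtained from a different decomposition of the K3 lattice, or by using that $X$ (being a Kummer-type or CM-type surface in this range) carries another, non-isotrivial or higher-$c$ fibration — to which the earlier cases apply. I would verify the existence of $p'$ by the standard correspondence between elliptic fibrations and nef classes $F$ with $F^2 = 0$ in $\Pic X$, checking that the finite list of $c \leq 0$ isotrivial K3s each admit such an alternative class.

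The main obstacle is the isotrivial $c \leq 0$ case: one must actually produce the alternative fibration $p'$ and confirm it escapes the bad range, which requires a genuine (if short) case analysis of the isotrivial K3 surfaces with small $c$ and their Néron–Severi lattices — everything else reduces to the already-established Propositions~\ref{prop:NoQuasiTorsion} and~\ref{prop:CapTorsionEmpty} together with Hassett's multisection constructions.
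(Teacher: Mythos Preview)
Your non-isotrivial argument is essentially the paper's: once you have a non-torsion rational multisection $R$, the algebraic curve $D(R)\subset J^0(X)$ has a component not contained in any $J^0(X)[m]$, and Proposition~\ref{prop:NoQuasiTorsion} (which requires non-isotriviality) then says that component is not elementary quasi-torsion. The paper simply cites Hassett's theorem for the existence of such $R$. Your fallback---if $R$ is torsion, use multiplication maps to ``spread it out'' into something non-torsion---does not work as stated: multiplication by $n$ sends $m$-torsion differences to $m$-torsion differences, so a torsion multisection stays torsion. But this is harmless, since Hassett already supplies the non-torsion $R$.

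The real gap is the isotrivial case with $c(X\to\PP^1)>0$. Here Proposition~\ref{prop:NoQuasiTorsion} is unavailable: by Example~\ref{ex:EQTIsotrivial}, isotrivial fibrations carry \emph{algebraic} elementary quasi-torsion curves that are not torsion, so ``non-torsion'' no longer implies ``non-quasi-torsion''. Your appeal to a positive-dimensional Hurwitz space does not close this: even if you can move the multisection in a family, nothing prevents every member from being quasi-torsion (the $T[x]$'s foliate $X_U$, and a priori a varying multisection could always decompose along leaves). The paper's mechanism is entirely different and is the missing idea: using the finite local monodromies at the singular fibers, one proves via the Hurwitz formula that any quasi-torsion curve $C$ whose $D(C)$ avoids low-order torsion satisfies $g(C)\ge (C\cdot X_t-1)\,c(X\to\PP^1)-2$. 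One then passes to a $p$-twist $Y$ with $J^p(Y)\cong X$ for $p$ a large prime, takes a rational curve $R\subset Y$ of fiber degree $d_0p$, and kills the low-order torsion in $D(R)$ by applying $g_k$ with $k=(\kappa_X!)^n$; the genus bound then forces $g(R)>0$, a contradiction. This degree-versus-genus tension is what your proposal lacks.

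For the $c\le 0$ reduction you have the right outline. The paper makes it explicit: Hassett's Proposition~9.6 gives $\operatorname{rk}\Pic X\ge 16$, hence a Jacobian fibration; if that is still isotrivial with $c\le 0$ the singular fibers lie in the list $\{I_0^*,IV^*,III^*,II^*\}$, and one writes down, from two such fibers and the section $S$, an explicit nef primitive class $E=2S+A_1+A_2$ with $E^2=0$ whose induced fibration has an $I_n^*$ fiber with $n>0$, hence is non-isotrivial.
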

        We start with the latter reduction step by using a similar technique as in \cite{oguiso1989jacobian}, where all Jacobian elliptic pencils on some special elliptic Kummer surfaces are constructed.
		\begin{lemma}
			\label{lem:WLOGC>0}
			Let $p\colon X\to\PP^1$ be an isotrivial elliptic K3 surface with $c(X\xrightarrow{p}\PP^1)\le 0$. Then there is another fibration $p'\colon X\to\PP^1$ that is non-isotrivial or isotrivial with $c(X\xrightarrow{p'}\PP^1)>0$.
		\end{lemma}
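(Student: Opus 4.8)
The plan is to classify, via the singular fibres, the few surfaces that can occur, and then to exhibit a new elliptic pencil on each of them by a $(-2)$-curve computation in the style of \cite{oguiso1989jacobian}. Since $p$ is isotrivial, every singular fibre has potentially good reduction, hence is of type $I_0^{*},II,II^{*},III,III^{*},IV$ or $IV^{*}$ --- the types $I_n,I_n^{*}$ with $n>0$ being ruled out by Section~\ref{sub:singularFibers}. The Euler numbers of the singular fibres sum to $e(X)=24$; subtracting this from the inequality $c(X\xrightarrow{p}\PP^{1})\le 0$, i.e.\ from $6n_0+10n_2+9n_3+8n_4\le 24$, yields $8\cdot\#\{II\}+6\cdot\#\{III\}+4\cdot\#\{IV\}\le 0$, so there are no fibres of type $II$, $III$ or $IV$, and in fact $c(X\xrightarrow{p}\PP^{1})=0$. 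Solving $6\,\#\{I_0^{*}\}+10\,\#\{II^{*}\}+9\,\#\{III^{*}\}+8\,\#\{IV^{*}\}=24$ then leaves exactly the configurations $4\,I_0^{*}$ (any $j$), $I_0^{*}+2\,III^{*}$ (forcing $j=1728$), $3\,IV^{*}$, and $I_0^{*}+II^{*}+IV^{*}$ (the last two forcing $j=0$); in each case $\rho(X)\ge 18$.

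If the configuration is $4\,I_0^{*}$, then $X\cong\operatorname{Km}(E_1\times E_2)$ is the Kummer surface of a product of two elliptic curves, the two product projections giving the two isotrivial $4\,I_0^{*}$-pencils. Among the $(-2)$-curves on $X$ are the sixteen exceptional curves $N_{ij}$ over the images of the points $(p_i,q_j)$, with $p_i,q_j$ the two-torsion of $E_1$ and $E_2$, together with the eight images $\overline{E}_1^{(j)},\overline{E}_2^{(i)}$ of the curves $E_1\times\{q_j\}$ and $\{p_i\}\times E_2$; these contain cyclic configurations of type $\widetilde A_7$, for instance the octagon through $\overline{E}_1^{(1)},N_{11},\overline{E}_2^{(1)},N_{12},\overline{E}_1^{(2)},N_{22},\overline{E}_2^{(2)},N_{21}$ (consecutive curves meeting in one point, all others disjoint). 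The reduced sum $F$ of such an octagon satisfies $F^{2}=0$ and $F\cdot C\ge 0$ for every irreducible curve $C$, so $F$ is nef and $|F|$ is an elliptic pencil; the fibre of $|F|$ containing the octagon is then of multiplicative type $I_m$ with $m\ge 8$, and by Section~\ref{sub:singularFibers} this forces the new fibration to be non-isotrivial.

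In each of the three remaining cases, $X$ is one specific K3 surface with $\rho(X)=20$, given by an explicit Weierstrass equation $y^{2}=x^{3}+a(t)$ or $y^{2}=x^{3}+b(t)x$. As a singular K3 surface, $X$ carries (classically) an elliptic fibration with two fibres of type $II^{*}$; its remaining singular fibres account for Euler number $4$, hence --- since $I_0^{*},II^{*},III^{*},IV^{*}$ each have Euler number larger than $4$ --- are of type $I_n$ $(n\ge1)$ or $II,III,IV$. In the first case this fibration is non-isotrivial; in the second it is isotrivial but carries a fibre of type $II$, $III$ or $IV$, so $c>0$. Either alternative yields the assertion, and alternatively one may produce the new pencil directly from the $(-2)$-curve configuration of fibre components and torsion sections, in the spirit of \cite{oguiso1989jacobian}.

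The crux is this construction step: one must identify $X$ precisely in each case and verify that the new pencil genuinely escapes the ``isotrivial with $c\le0$'' locus, that is, that it acquires a fibre of multiplicative type or one of type $II$, $III$ or $IV$. For $\operatorname{Km}(E_1\times E_2)$ this is the octagon computation above; for the three sporadic surfaces it is the analysis of the complementary fibres of the fibration with two $II^{*}$ fibres. In both cases it reduces to explicit bookkeeping with $(-2)$-curves and root lattices of exactly the kind carried out in \cite{oguiso1989jacobian}.
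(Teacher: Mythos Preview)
Your classification of the possible singular-fibre configurations is correct, but the argument that follows has a genuine gap: you tacitly assume that the given fibration $p$ admits a section. Without a section the identification $X\cong\operatorname{Km}(E_1\times E_2)$ in the $4I_0^*$ case is unjustified (the curves $\overline{E}_2^{(i)}$ you use in the octagon are precisely sections of $p$), and in the three remaining cases the fibre components together with the fibre class only span a rank-$19$ sublattice of $\Pic X$, so your assertion $\rho(X)=20$ does not follow; consequently the appeal to the Shioda--Inose $2II^*$ fibration is not available. Non-trivial torsors over the Jacobian do occur in these situations, so this is not a cosmetic issue.

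The paper deals with this problem differently and, in fact, avoids the case distinction altogether. It first invokes \cite[Proposition~9.6]{hassett2003potential} to get $\rho(X)\ge 16$, which guarantees a (possibly different) \emph{Jacobian} fibration $p'$ on $X$. If $p'$ already satisfies the conclusion we are done; otherwise $p'$ is again isotrivial with $c\le 0$, and now the section $S$ is available. Then, uniformly for any two singular fibres $F_1,F_2$ (each of type $I_0^*$, $IV^*$, $III^*$ or $II^*$), one writes down an explicit effective divisor $E=2S+A_1+A_2$ supported on $S$ and suitable chains of components of $F_1,F_2$; one checks $E^2=0$, $E$ primitive and nef, and by inspection the resulting pencil has a fibre of type $I_n^*$ with $n>0$, hence is non-isotrivial. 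This single construction replaces your four separate identifications and the Shioda--Inose input; your octagon argument for the Kummer case is correct in spirit but is subsumed by the paper's $\widetilde D_{n+4}$-construction, which works without first recognising $X$ as a specific surface.
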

        
        \begin{proof}
			By \cite[Proposition 9.6]{hassett2003potential} we have $\operatorname{rk} \Pic X\ge 16$. Hence we can replace $p\colon X\to \PP^1$ by a Jacobian fibration $p'\colon X\to\PP^1$ with a section $S$. If it is isotrivial with $c(X\xrightarrow{p'}\PP^1)\le 0$ then by \cite[Proposition 9.6]{hassett2003potential} the only singular fibers that can occur are as in Figure \ref{diag:DynkinKummerType}. We pick two degenerate fibers $F_1,F_2$ and denote the components as indicated in Figure~\ref{diag:DynkinKummerType}, where $\alpha_1$ denotes the component meeting the section $S$.
			\begin{figure}[H]
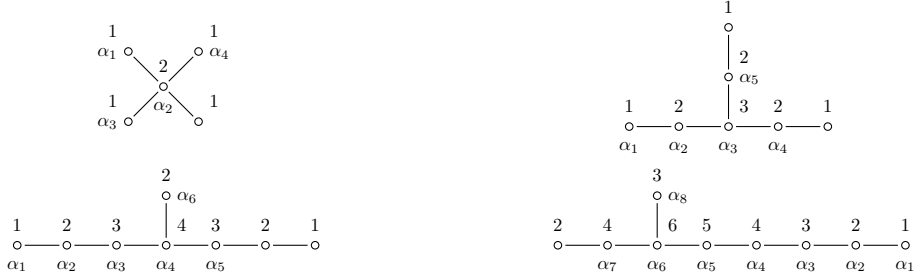

				\begin{subfigure}[b]{\textwidth}
				\begin{minipage}{0.4\textwidth} 
					\center
					\dynkin[Kac,extended,labels*={1,1,2,1,1}, labels={\alpha_1, \alpha_3,\alpha_2, \alpha_4}] D4
				
				\end{minipage}
				\hfill
				\begin{minipage}{0.4\textwidth}
					\center
					\dynkin[Kac,extended,labels*={1,1,2,3,2,1,2}, labels={,\alpha_1,\alpha_2,\alpha_3, \alpha_4,,\alpha_5}] E6		
				
				\end{minipage}
				\hfill
			    \end{subfigure}
			    \bigskip
				\begin{subfigure}[b]{\textwidth}
				\begin{minipage}{0.4\textwidth}
					\center
					\dynkin[Kac,extended,labels*={1,2,3,4,3,2,1,2}, labels={\alpha_1,\alpha_2,\alpha_3, \alpha_4,\alpha_5,,,\alpha_6}] E7		
				
				\end{minipage}
				\hfill
				\begin{minipage}{0.4\textwidth}
					\center
					\dynkin[Kac,extended,labels*={1,2,3,4,5,6,4,2,3}, labels={\alpha_1,\alpha_2,\alpha_3, \alpha_4,\alpha_5,\alpha_6, \alpha_7, ,\alpha_8}] E8		
				
				\end{minipage}		
				\end{subfigure}
				\caption{Fibers occuring in isotrivial fibrations with $c(X)\le 0$.}
				\label{diag:DynkinKummerType}
			\end{figure}
			\begin{figure}[ht]
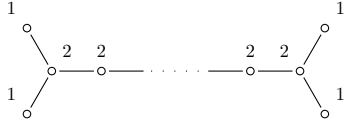

				\begin{minipage}{0.4\textwidth} 
					\center
					\dynkin[Kac,extended,labels*={1,1,2,2,2,2,1,1}, ] D{}
					
				\end{minipage}
				\caption{Fiber of type $I_n^*$}
			\end{figure}
			For both fibers $F_i\;(i=1,2)$ let $A_i= 2\sum_{j=1}^{k-2} \alpha_j +\alpha_{k-1}+ \alpha_k\in \Pic(X)$, where the components $\alpha_j$ are the components of the respective fiber as indicated above in Figure~\ref{diag:DynkinKummerType} and $k$ is the highest occuring index.
			The effective divisor $E = 2S+A_1+A_2$ defines a nef primitive class with $E.E = 0$. Then $|\oO(E)|$ induces an elliptic fibration by \cite[Proposition 2.3.10]{huybrechts2016lectures} and by  construction the fibration has a fiber of type $I_n^*$ with $n>0$. We conclude that this fibration is non-isotrivial.
		\end{proof}
		
        \subsection*{The non-isotrivial case}
        The non-isotrivial case is particularly simple as there is the following theorem:
        \begin{theorem}[{\cite[Theorem 8.3]{hassett2003potential}}]
            Let $X\to\PP^1$ be a non-isotrivial K3 surface. Then there exist non-torsion rational multisections on $X$.
        \end{theorem}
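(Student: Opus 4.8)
This is a theorem of Hassett, and the plan is to recall the two-stage structure of his argument: first produce some rational multisection of degree $\ge 2$, and then show that on a non-isotrivial fibration any such multisection is automatically non-torsion.

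For the first stage, recall that every complex projective elliptic K3 surface carries infinitely many rational curves (Bogomolov--Mumford, and \cite{bogomolov1999density,hassett2003potential}), and that only finitely many of them lie in fibers, since there are finitely many singular fibers, each a configuration of finitely many rational curves. When $X$ has at most finitely many sections (in particular if it is non-Jacobian, or Jacobian of Mordell--Weil rank $0$) the pigeonhole principle yields a rational multisection $R$ of degree $d\ge 2$ over $\PP^1$; in the remaining case, $X$ Jacobian of positive Mordell--Weil rank with a non-torsion section $\sigma$ and zero section $O$, one instead obtains a rational bisection by the Bogomolov--Mumford smoothing technique applied to a suitable nodal union of $O$, $\sigma$, and singular fibers inside a linear system $|O+\sigma+kF|$, $k\gg 0$ (with $F$ a fiber). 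So in all cases $X$ carries a rational multisection $R$ of degree $d\ge 2$.

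For the second stage, suppose for contradiction that $R$ is torsion, say all fiberwise differences of its points are $N$-torsion. Over $U$ the $d$ branches $p_1,\dots,p_d$ of $R$ satisfy $p_i-p_j\in\mathrm{Jac}^0(X_t)[N]$, and since $\mathrm{Jac}^0(X)[N]\to U$ is finite \'etale these differences are locally constant in $t$; hence distinct branches never collide over $U$ and $R_U\to U$ is \'etale of degree $d$. Thus the normalization $\tilde R\to R\subset X$ gives a degree-$d$ map $\tilde R\to\PP^1$ unramified over $U$, and by Riemann--Hurwitz $\tilde R\cong\PP^1$ would force the total ramification of this map over the finitely many singular fibers to be exactly $2d-2$. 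This is where non-isotriviality is used: there is a singular fiber of type $I_n$ (and, by the Shioda--Tate constraints, enough bad reduction overall), the permutation induced on the $d$ branches around such a fiber is controlled by the infinite-order unipotent local monodromy of $X$, and since the global monodromy group surjects onto $\operatorname{SL}(2,\ZZ/p\ZZ)$ for $p\gg 0$ (Hassett's lemma recalled above) one cannot align all $d$ branches with the vanishing cycle at every bad fiber; the resulting nontrivial monodromy cycles contribute strictly more ramification than $g(\tilde R)=0$ permits once $d\ge 2$, a contradiction. Hence $R$ is non-torsion.

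This ramification/genus estimate is the main obstacle, and it is precisely the computation underlying \cite[Theorem~8.3]{hassett2003potential} (there packaged in the equivalent form that $X[p]$ minus the zero section is irreducible of large genus for primes $p\gg 0$); the only other non-formal ingredient is the existence of the initial rational multisection of degree $\ge 2$, which in the Mordell--Weil rank $0$ case rests on the classical construction of infinitely many rational curves on elliptic K3 surfaces \cite{bogomolov1999density,hassett2003potential}.
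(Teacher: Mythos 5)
The statement is a black-box citation: the paper quotes \cite[Theorem 8.3]{hassett2003potential} and uses it without proof (see the one-line deduction of Theorem~\ref{thm:NQTCurvesExist} in the non-isotrivial case), so there is no internal argument to compare against; what you have written is necessarily a reconstruction of Hassett's external proof.

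As a reconstruction it has the right ingredients — existence of a rational multisection of fiber degree $d\ge 2$, Riemann--Hurwitz, the $\SL(2,\ZZ/p\ZZ)$-surjectivity of the monodromy — but the decisive inequality is asserted rather than proved. You claim that ``the resulting nontrivial monodromy cycles contribute strictly more ramification than $g(\tilde R)=0$ permits once $d\ge 2$,'' but nothing in your sketch forces that: the ramification contribution of any one singular fiber is at most $d-1$, and at an $I_n$ fiber the unipotent local monodromy $\left(\begin{smallmatrix}1&n\\0&1\end{smallmatrix}\right)$ fixes a whole rank-one subgroup of the $N$-torsion, so a branch need not be ramified there at all. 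To close the estimate one must first bound the torsion order $N$ in terms of $d$ (this is precisely where $\Gamma\twoheadrightarrow\SL(2,\ZZ/p\ZZ)$ enters: the set of pairwise differences of the $d$ branches is a $\Gamma$-stable subset of $X_t[N]$ of cardinality at most $d(d-1)+1$, which excludes large prime divisors of $N$), and then actually count the ramification of the relevant component of $X[N]$ over $\PP^1$ to exhibit positive geometric genus. Compare the explicit genus computation the paper \emph{does} carry out in Section~\ref{sec:ExistenceRNQT} for the isotrivial case — that is the kind of calculation your sketch gestures at but does not perform, and it is the substance of the theorem. Your Stage~1 is also heavier than needed: passing to a $p$-twist $Y$ with $J^p(Y)\cong X$, the cited Lemma~3.5 of \cite{bogomolov1999density} already produces a rational multisection of fiber degree exactly $p$, with no case split on Mordell--Weil rank.
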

        
        \begin{proof}[Proof of Theorem \ref{thm:NQTCurvesExist} in the non-isotrivial case.]
            Let $R\subset X$ be a non-torsion rational multisection coming from \cite{hassett2003potential}.
            The difference $D(C)\subset J^0(X)$ yields an algebraic subset with not all of its irreducible components contained in some $X[m]\; (m\in \NN)$. But if $C$ was quasi-torsion, all components would be contained in some $X[m]$ by Proposition \ref{prop:NoQuasiTorsion}, a contradiction.
        \end{proof}
        
        \subsection*{The isotrivial case with \texorpdfstring{$c(X\to\PP^1)>0$}{c(X, P1)>0}}
        We proceed by imitating the genus calculation from \cite{hassett2003potential} in the case of quasi-torsion multisections by investigating the local monodromy. From this we will see that the genus of quasi-torsion curves $C$ grows with its fiber degree $C.X_t$. We will need the following preparatory lemma:
        \begin{lemma}
			Let $\operatorname{id} \neq \gamma\in \SL(2,\ZZ)$ be an element of finite order and $d<\ord(\gamma)$.  Then there is a natural number $\kappa$ such that there exists an $x\in \RR^2\setminus \bigcup_{i=1}^\kappa \tfrac{1}{i}\ZZ^2$ with
			\begin{equation*}
				\sum_{i=0}^d \gamma^i x = 0 \mod \ZZ^2
			\end{equation*}
			if and only if $d = \ord(\gamma)-1$. Moreover in this case $\sum_{i=0}^{\ord(\gamma)-1} \gamma^i = 0$.
		\end{lemma}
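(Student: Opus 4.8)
The plan is to analyze the $\SL(2,\ZZ)$-module structure of $\ZZ^2$ under the finite-order element $\gamma$ and to identify the operator $N_d := \sum_{i=0}^d \gamma^i$ acting on $\RR^2/\ZZ^2$. First I would recall the classification: a nontrivial finite-order element of $\SL(2,\ZZ)$ has order $2,3,4$ or $6$, and in each case its characteristic polynomial is a cyclotomic-type polynomial dividing $x^{\ord(\gamma)}-1$ but with $\gamma \neq \operatorname{id}$, so $\gamma$ has no eigenvalue equal to $1$; equivalently $\det(\operatorname{id}-\gamma)\neq 0$. The key algebraic identity is $(\operatorname{id}-\gamma)N_d = \operatorname{id}-\gamma^{d+1}$. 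Since $\operatorname{id}-\gamma$ is invertible over $\QQ$, we get $N_d = (\operatorname{id}-\gamma)^{-1}(\operatorname{id}-\gamma^{d+1})$ as rational matrices.

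Next I would split into the two directions. For the ``only if'' direction: suppose there exists $x$ with $N_d x \equiv 0 \pmod{\ZZ^2}$ and $x \notin \tfrac{1}{i}\ZZ^2$ for any $i \le \kappa$. If $d+1 < \ord(\gamma)$, then $\gamma^{d+1}\neq \operatorname{id}$, so $N_d = (\operatorname{id}-\gamma)^{-1}(\operatorname{id}-\gamma^{d+1})$ is a product of two matrices each invertible over $\QQ$, hence $N_d$ itself is invertible over $\QQ$. Then $N_d x \in \ZZ^2$ forces $x \in N_d^{-1}\ZZ^2 \subset \tfrac{1}{i}\ZZ^2$ for $i$ the exponent of the finite group $\ZZ^2 / N_d\ZZ^2$ (equivalently $i$ can be taken to be $|\det N_d|$). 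So choosing $\kappa$ at least this large rules out such an $x$; contradiction. Hence $d+1 \ge \ord(\gamma)$, and since $d < \ord(\gamma)$ we get $d = \ord(\gamma)-1$ exactly. For the ``moreover'' part, $d = \ord(\gamma)-1$ gives $\gamma^{d+1} = \operatorname{id}$, so $(\operatorname{id}-\gamma)N_d = 0$; as $\operatorname{id}-\gamma$ is invertible over $\QQ$ this yields $N_d = \sum_{i=0}^{\ord(\gamma)-1}\gamma^i = 0$.

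For the ``if'' direction: assume $d = \ord(\gamma)-1$. Then $N_d = 0$ on all of $\RR^2$, so $N_d x = 0 \equiv 0 \pmod{\ZZ^2}$ for every $x$; in particular we may pick any $x \in \RR^2$ with irrational coordinates (e.g.\ $x = (\sqrt{2},\sqrt{3})$), which lies in no $\tfrac1i\ZZ^2$, so the condition $x \in \RR^2 \setminus \bigcup_{i=1}^\kappa \tfrac1i\ZZ^2$ holds for every $\kappa$. This settles existence of the required $x$ regardless of how $\kappa$ is chosen, so any $\kappa$ (say $\kappa = 1$) works.

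The only genuinely delicate point is making the constant $\kappa$ in the ``only if'' direction depend only on $\gamma$ (and $d$), not on $x$: this is precisely where one needs that $N_d$ is invertible over $\QQ$ when $d+1 < \ord(\gamma)$, so that $N_d^{-1}\ZZ^2$ is a fixed lattice and one can take $\kappa = |\det N_d|$ (or the exponent of $\ZZ^2/N_d\ZZ^2$). Everything else reduces to the finite check that $\gamma \neq \operatorname{id}$ of finite order has $1$ as no eigenvalue, which is immediate from the order-$2,3,4,6$ classification or from $\det(\operatorname{id}-\gamma) = 2 - \operatorname{tr}\gamma \neq 0$ for $\operatorname{tr}\gamma \in \{-2,-1,0,1\}$ — the value $\operatorname{tr}\gamma = 2$ being excluded since it would force $\gamma$ unipotent, hence of infinite order unless $\gamma = \operatorname{id}$.
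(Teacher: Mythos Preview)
Your proof is correct and follows essentially the same route as the paper: both arguments use the identity $(\operatorname{id}-\gamma)\sum_{i=0}^{d}\gamma^i = \operatorname{id}-\gamma^{d+1}$, observe that $\operatorname{id}-\gamma$ (and $\operatorname{id}-\gamma^{d+1}$ when $d<\ord(\gamma)-1$) is invertible over $\QQ$ because a nontrivial finite-order element has no eigenvalue $1$, and then bound the denominators of $x$ by the relevant determinant. The only cosmetic difference is that the paper takes $\kappa=\max_{n}|\det(\operatorname{id}-\gamma^{n})|$, uniform over all $d$, whereas you take $\kappa=|\det N_d|$ for the fixed $d$ of the statement; either choice suffices.
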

		\begin{proof}
			As $\sum_{i=0}^{\ord(\gamma)-1} \gamma^i = 0$ holds, one direction is obvious.
			
			Let $n< \ord(\gamma)$. Then $\id - \gamma^n$ is invertible over $\QQ$ as $\gamma^n$ has no Eigenvalue $1$. Let $A$ be its inverse. Then $B = A\cdot (1-\gamma)$ is an inverse for $C = \sum_{i=0}^n\gamma^n$. Therefore 
			$Cx \in \ZZ^2$ implies 
			\begin{equation*}
					x = BCx \in \frac{1}{|\det \id - \gamma^n|}\ZZ^2.
			\end{equation*}
			Then $\kappa = \max_n |\det \id - \gamma^n|$ yields the result.
		\end{proof}
		The geometric meaning of the lemma is as follows. Recall that an element $\gamma\in\SL(2,\ZZ)$ acts on the points $x=a\tau+b\in \CC/(\ZZ\tau+\ZZ)$ of an elliptic curve $E= \CC/(\ZZ\tau+\ZZ)$ by acting on the tuple $(a,b)$ via the transposed matrix.
		\begin{corollary}
		    \label{cor:OrderOfPointInElliptic}
		    Let $\operatorname{id} \neq \gamma\in \SL(2,\ZZ)$ be an element of finite order, $d<\ord(\gamma)$ and $E= \CC/(\ZZ\tau+\ZZ)$ an elliptic curve. There is a natural number $\kappa$ such that for any element $x\in E$ that is not torsion of order less than or equal to $\kappa$ the sum 
		    \begin{equation*}
				\sum_{i=0}^d \gamma^i x = 0 \in E
			\end{equation*}
			if and only if $d = \ord(\gamma)-1$.
		\end{corollary}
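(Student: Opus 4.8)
The plan is to deduce the corollary from the preceding lemma by translating through the standard identification of $E = \CC/(\ZZ\tau+\ZZ)$ with $\RR^2/\ZZ^2$. Writing a point as $x = a\tau+b$ with $(a,b)\in\RR^2$, the point $x$ is torsion of order dividing $i$ exactly when $(a,b)\in\tfrac{1}{i}\ZZ^2$, so $x$ is torsion of order at most $\kappa$ precisely when $(a,b)\in\bigcup_{i=1}^\kappa \tfrac{1}{i}\ZZ^2$. By the action recalled just before the statement, $\gamma$ acts on $x$ by letting the transpose $\gamma^T$ act on $(a,b)$, whence $\gamma^i x$ corresponds to $(\gamma^i)^T(a,b)$ and the relation $\sum_{i=0}^d\gamma^i x = 0$ in $E$ is equivalent to $\sum_{i=0}^d (\gamma^T)^i(a,b)\equiv 0 \mod \ZZ^2$.

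First I would observe that $\gamma^T\in\SL(2,\ZZ)$ again has finite order, equal to $\ord(\gamma)$, since $(\gamma^T)^n = (\gamma^n)^T$, and then apply the lemma to $\gamma^T$; the resulting constant $\kappa = \max_n|\det(\id-\gamma^n)|$ depends only on $\gamma$ and not on $\tau$, so a single $\kappa$ serves all elliptic curves $E$ at once. For the forward implication, if $d = \ord(\gamma)-1$ the lemma yields the matrix identity $\sum_{i=0}^{\ord(\gamma)-1}(\gamma^T)^i = 0$, hence $\sum_{i=0}^d\gamma^i x = 0$ in $E$ for \emph{every} $x$, no hypothesis on $x$ being needed.

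For the converse I would argue by contraposition: if $d<\ord(\gamma)-1$, the lemma says there is no $(a,b)\in\RR^2\setminus\bigcup_{i=1}^\kappa\tfrac{1}{i}\ZZ^2$ with $\sum_{i=0}^d(\gamma^T)^i(a,b)\equiv 0\mod \ZZ^2$; equivalently, any $x\in E$ satisfying $\sum_{i=0}^d\gamma^i x = 0$ must have $(a,b)\in\bigcup_{i=1}^\kappa\tfrac{1}{i}\ZZ^2$, i.e.\ be torsion of order at most $\kappa$ — which is exactly the contrapositive of the claim. Since the corollary is just a geometric restatement of the lemma, I expect no genuine obstacle; the only delicate points are the presence of the transpose in the action on $E$, handled by invoking the lemma for $\gamma^T$ rather than $\gamma$, and the fact that $\kappa$ can be chosen uniformly in $\tau$, which holds because $\kappa$ is defined purely in terms of $\gamma$.
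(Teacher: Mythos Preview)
Your proposal is correct and matches the paper's approach: the paper presents the corollary without a separate proof, simply noting beforehand that it is the geometric meaning of the lemma via the identification $x=a\tau+b$ and the transposed action of $\gamma$ on $(a,b)$. Your write-up just makes this translation explicit, including the observation that $\kappa$ depends only on $\gamma$ and the application of the lemma to $\gamma^T$.
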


		\begin{definition}
		    Let $X\to\PP^1$ be an elliptic Jacobian isotrivial K3 surface. Then the minimal $\kappa$ fulfilling the conditions of the previous corollary for all $\gamma\in \SL(2,\ZZ)$ that occur in the local monodromy of a singular fiber of $X\to\PP^1$ is denoted by $\kappa_X$.
		\end{definition}
		
		\begin{proposition}
			Let $X\to\PP^1$ be an isotrivial K3 surface. Let $C\subset X$ be a quasi-torsion curve such that $D(C)$ contains no component that is torsion of order up to $\kappa_X$. Then the geometric genus $g(C)$ satisfies
			\begin{equation*}
				g(C) \ge (C.X_t-1)c(X\to\PP^1) -2.
			\end{equation*}
		\end{proposition}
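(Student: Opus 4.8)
The plan is to imitate the Riemann--Hurwitz genus computation of Hassett applied to the normalization $f\colon \tilde C\to C\subset X$. Write $D=C.X_t$ for the fiber degree. The key observation is that near a singular fiber $X_s$ of type $I_0^*$, $II,II^*$, $III,III^*$, or $IV,IV^*$, the local monodromy $\gamma$ has finite order $\mathrm{ord}(\gamma)\in\{2,3,4,6\}$, and $c(X\to\PP^1)$ is precisely (up to the constant $-2$) the sum over the singular fibers of $(1-1/\mathrm{ord}(\gamma))$-type contributions. So I would first recall, from the theory of the $j$-function of an isotrivial fibration, that the monodromies at the listed fiber types have orders $2$ ($I_0^*$), $6$ ($II,II^*$), $4$ ($III,III^*$), $3$ ($IV,IV^*$), and that $c(X\to\PP^1)+2 = \sum_s (1-\tfrac1{\mathrm{ord}(\gamma_s)})$ when one is careful about the $\tfrac12 n_0+\tfrac56 n_2+\tfrac34 n_3+\tfrac23 n_4$ bookkeeping.

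The heart of the argument is a \emph{local branching estimate} at each singular fiber. Fix a singular fiber over $s$ with finite monodromy $\gamma=\gamma_s$ of order $e=\mathrm{ord}(\gamma)$. Choose a small punctured disc $\Delta^*\subset\PP^1$ around $s$. Since $C$ is quasi-torsion, $D(C)=\bigcup_{x\in S}T[x]$ with $S\subset X_U$ finite, and by hypothesis no $T[x]$ appearing here is torsion of order $\le\kappa_X$. I claim the preimage $f^{-1}(X_s)$ in $\tilde C$ consists of points whose ramification indices are all divisible by $e$ --- or at worst, the number of points of $\tilde C$ over $s$ is at most $D/e + O(1)$. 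The mechanism: pick two points $p,q\in C\cap X_t$ for $t\in\Delta^*$ with $p,q$ in distinct branches; the class $d(p,q)\in J^0(X_t)$ lies on some $T[x]$, and as $t$ loops around $s$ the monodromy sends $x\mapsto \gamma^T x$. Using Corollary~\ref{cor:OrderOfPointInElliptic}: if a branch of $C$ has ramification index $d<e$ over $s$, then going around $s$ permutes $d$ sheets cyclically, and summing the differences of consecutive sheets forces $\sum_{i=0}^{d-1}\gamma^i x=0$ in the fiber, which by the corollary is impossible unless $d=e$ (here is exactly where ``not torsion of order $\le\kappa_X$'' is used, so that the corollary applies to the relevant points $x$). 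Hence every cycle of the monodromy permutation on the $D$ sheets over $\Delta^*$ has length exactly $e$ (a full orbit), so the number of points of $\tilde C$ over $s$ is $D/e$, each with ramification index $e$. Actually one must allow a bounded correction because some sheets may lie in a $T[x]$ that \emph{is} torsion of small order or the two branches may coincide --- this accounts for the additive constant $-2$ in the statement; I would track it carefully.

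With the local estimate in hand, the proof concludes by Riemann--Hurwitz. The projection $C\to\PP^1$ has degree $D=C.X_t$; the map $f\colon\tilde C\to\PP^1$ then has
\begin{equation*}
2g(\tilde C)-2 = D(2\cdot 0-2) + \sum_{\text{ram.\ pts}}(e_P-1) \ge -2D + \sum_s \Big(\tfrac{D}{e_s}(e_s-1)\Big) - O(1) = -2D + D\sum_s\Big(1-\tfrac1{e_s}\Big) - O(1),
\end{equation*}
and substituting $\sum_s(1-1/e_s) = c(X\to\PP^1)+2$ gives $2g(C)-2 \ge -2D + D(c(X\to\PP^1)+2) - O(1) = D\cdot c(X\to\PP^1) - O(1)$, i.e.\ $g(C)\ge (C.X_t-1)c(X\to\PP^1)-2$ after absorbing the degree-one term and the bounded error into the $-1$ and the $-2$. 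I would also need to reduce to the Jacobian case (to make sense of $D(C)$ and the $T[x]$, and to invoke Proposition~\ref{prop:NoQuasiTorsion}/Corollary~\ref{cor:OrderOfPointInElliptic} on a genuine Jacobian model $J^0(X)$), which is routine since $J^0(X)$ is again an isotrivial K3 with the same $j$-invariant and the same singular fiber types, hence the same $c$.

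\textbf{Main obstacle.} The delicate point is the local branching estimate: making precise the claim that a branch of $C$ ramified to order $d<e_s$ over a singular fiber forces a relation $\sum_{i=0}^{d-1}\gamma_s^i x = 0$ among torsion-difference data, and in particular arranging that the point $x\in X_t$ to which Corollary~\ref{cor:OrderOfPointInElliptic} is applied is genuinely not torsion of order $\le\kappa_X$ --- this is exactly where the hypothesis on $D(C)$ is consumed, and one must be careful that the finitely many ``bad'' sheets (those whose pairwise differences land in low-order torsion $T[x]$'s, or where two branches collide) contribute only a bounded correction, independent of $D$.
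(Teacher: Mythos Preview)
Your proposal is correct and follows essentially the same approach as the paper: Riemann--Hurwitz for $\tilde C\to\PP^1$, combined with the local monodromy argument that a non-trivial cycle of length $d$ forces $\sum_{i=0}^{d-1}\gamma^i x=0$ for $x=\gamma.p-p\in D(C)$, whence $d=\ord(\gamma)$ by the corollary. The only place the paper is sharper is your ``$O(1)$ correction'': rather than leaving it unspecified, the paper observes that the exceptional branches are exactly the local \emph{sections} (where $x=0$, so the corollary doesn't apply), and shows there is at most one of these over each singular fiber---two section branches $B_1,B_2$ would give a nonzero monodromy-fixed point $p=B_1-B_2\in D(C)$, forcing $p$ to be torsion of order $\le\kappa_X$---which yields the coefficient $(C.X_t-1)$ on the nose with no absorbing of constants. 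Your worry about ``sheets whose differences land in low-order torsion $T[x]$'s'' does not actually arise, since the hypothesis already excludes such components of $D(C)$.
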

		\begin{proof}
			We follow the idea of \cite{hassett2003potential} by calculating the ramification occuring at the singular fibers and then applying the Hurwitz formula.
			
			Let $\Delta\subset \PP^1$ be a small disc around a singular fiber such that $C$ is smooth over punctured disc $\Delta^*$. Pick a local branch $B$ of $C$. Then there are two cases:\bigskip\\
			\underline{\textit{Case 1: $B$ is not a section:}} Fix a fiber $X_t$ and a point $p \in B\cap X_t$. Moreover let $\gamma\in\SL(2,\ZZ)$ be a generator of the local monodromy group. By construction the point $q = \gamma.p - p\in J^0(X_t)$ is not zero. Applying $\gamma$ again yields $\gamma^i.q = \gamma^i.p - \gamma^{i-1}.p$ and therefore
			\begin{equation*}
				\gamma^i.p - p = \sum_{j=0}^{i-1} \gamma^j.q.
			\end{equation*}
			By Corollary \ref{cor:OrderOfPointInElliptic} the smallest $i>0$ such that $\gamma^i.p = p$ is equal to $\ord(\gamma)-1$. Thus the ramification contribution $e_i$ of this branch is $\ord(\gamma)-1$.
			\bigskip\\
			\underline{\textit{Case 2: $B$ is a section:}} Suppose there is another branch that is also a local section. This in turn would yield a local section of $D(C)$ as well and thus we have $\gamma.p = p$ for some $p\in J^0(X_t)$. But this is a contradiction to the previous corollary and the assumption that $D(C)$ contains no torsion of order up to $\kappa_X$. Hence there is at most one branch that is a local section.
			\bigskip\\
			To conclude, for one fixed degenerate fiber with local monodromy generated by $\gamma\in \SL(2,\ZZ)$, we have that the ramification contribution $e_i$ is greater or equal to $(X_t.C-1)\cdot \tfrac{\ord(\gamma)-1}{\ord(\gamma)}$. Hence, by the Hurwitz formula we get
			\begin{align*}
		        	2g(C)-2 &\ge (X_t.C)\cdot (2g(\PP^1)-2) + \sum_i e_i \\
		        	        &\ge-2(X_t.C)+ (X_t.C-1)\cdot (\tfrac{1}{2}n_0 + \tfrac{5}{6}n_2 + \tfrac{3}{4}n_3 + \tfrac{2}{3}n_4) \\
		        	        &= (X_t.C-1)c(X\to\PP^1)-2.\qedhere
			\end{align*}

		\end{proof}
        Now we are finally able to prove the last remaining part of Theorem \ref{thm:NQTCurvesExist}.
        \begin{proof}[Proof of Theorem \ref{thm:NQTCurvesExist} in the remaining case.] 
            Let $d_0$ be the index of $X$ and let $p\gg0$ be a prime. 
            By \cite[Chapter 11.5]{huybrechts2016lectures} we can choose a $p$-twist $Y\to \PP^1$ of $X\to\PP^1$, i.e., $J^p(Y)\cong X$ as an isomorphism of elliptic surfaces. 
            Then the index of $Y$ is $d_0p$. By \cite[Lemma 3.5]{bogomolov1999density} we can choose a rational curve $R\subset Y$ with $R.Y_t = d_0p$. 
            Suppose that this curve is quasi-torsion and denote $k = (\kappa_X!)^n$ for some $n\in\NN$. Recall from Section \ref{sub:compactifiedJacobian} that there is a multiplication map, i.e., 
            \begin{equation*}
                g_k: J^1(Y) \dashrightarrow J^k(Y).
            \end{equation*}
			Then taking the image of $R$ under this map yields that $R' = g_{k}(R)$ is a rational curve in $Y' = J^{k}(Y)$. Moreover as $\gcd(p,k) = 1$ we know that $R'.Y'_t \ge p$.
			For $n\in\NN$ big enough we can assume that $D(R')$ does not contain non-trivial torsion of order up to $\kappa_X$. 
			Then the previous proposition shows that $R'$ (and hence $R$) can not be rational for $p\gg 0$, a contradiction. 
			Therefore $R$ is not quasi-torsion and $g_p(R)\subset J^p(Y) \cong X$ gives the desired curve.
        \end{proof}
    \section{Producing curves with many singularities}  
        \label{sec:ProducingSingularities}
        In this section we will prove Theorem~\ref{thm:MainTheorem1}. The idea is to examine what happens to rational curves under self-rational maps. 
		The latter are constructed as follows. We define the map $g_n\colon  X\dashrightarrow J^n(X)$ as the composition of the identification $X\cong J^1(X)$ and the multiplication map $J^1(X)\dashrightarrow J^n(X)$
		\begin{equation*}
			g_n\colon  X\to J^1(X)  \dashrightarrow J^n(X).
		\end{equation*}
        We will then show that given a non-quasi-torsion curve $C\subset X$ the rational maps $g_n$ produce new curves $C'=g_n(C)\subset X$ such that $C'$ has many singularities.
		\begin{proposition}
			\label{prop:ProductionNewCurves}
			Let $X\to \PP^1$ be an elliptic K3 surface and $C$ be a curve with $C.X_t > 1$ that is non-quasi-torsion and such that $g_n|_C$ is a birational map to its image for all $n\equiv 1\mod d_0$. Then for every $n\in \NN$ there are curves $C_i\subset J^n(X)$ with a rational map $C\dashrightarrow C_i$ such that $C_i^2 \to \infty$.
		\end{proposition}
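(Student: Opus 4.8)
The plan is to show that the self-maps $g_n$, iterated or taken for growing $n$, force the arithmetic genus of the image curves to grow without bound, and that the genus growth must come from singularities because the geometric genus is controlled. First I would recall that since $C$ is non-quasi-torsion, $D(C) \subset J^0(X)$ has an irreducible component $D_0$ that is not of the form $T[x]$ for finitely many $x$; by Corollary~\ref{cor:TorsionDense}, the torsion $\bigcup_{n \in I} X[n]$ meets $D_0$ in a dense set, so $D_0$ meets $X[m]$ for a cofinal set of $m$. The key point is to translate an intersection point of $D(C)$ with $X[m]$ into a \emph{collision of branches} of the image curve $C_m := g_m(C)$ — that is, into a singular point of $C_m$ — and to show the number of such forced singularities grows with $m$.

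The main steps, in order, would be: (1) Fix $n$ and reduce to studying $g_n$ on $J^k(X)$ for varying $k \equiv 1 \bmod d_0$, using the isomorphisms $J^k(X) \cong J^{k+d_0}(X)$ from Section~\ref{sub:compactifiedJacobian} to stay on a fixed surface up to isomorphism; since $g_n|_C$ is birational onto its image by hypothesis, $g(C_k) = g(C)$ is constant, independent of $k$. (2) Compute the fiber degree: $C_k . X_t = k \cdot (C.X_t)$ (tensoring with $M|_{X_t}$ multiplies degrees, and the multiplication map $g_k$ multiplies the class in the fiber direction), so the fiber degree grows linearly in $k$. (3) Bound the self-intersection $C_k^2$ from below using the adjunction formula on the K3 surface: $C_k^2 = 2 p_a(C_k) - 2 = 2 g(C_k) - 2 + 2\delta(C_k)$ where $\delta(C_k)$ is the total delta-invariant (sum of $\delta$-invariants of singular points, including contributions from the normalization). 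Since $g(C_k)$ is bounded, it suffices to show $\delta(C_k) \to \infty$. (4) Produce the singularities: for a point $e \in D_0 \cap X[m]$ coming from $d(x,y)$ with $x,y \in C_U \cap X_t$ distinct, track how $x$ and $y$ are transported by $g_k$; the torsion relation $k \cdot (x - y) \equiv 0$ in $\mathrm{Jac}^0(X_t)$ forces $g_k(x)$ and $g_k(y)$ to land in the same point of the fiber of $J^k(X)$, giving a node (or worse) of $C_k$. As $k$ runs through the cofinal set of suitable multiples and $m | k$, the number of such coincidences is at least proportional to $C_k . X_t$ (an intersection-number count on the fiber), hence unbounded.

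The main obstacle I expect is step (4) — making the passage from "$D(C)$ meets $X[m]$" to "$C_k$ actually acquires a new honest singularity, not a spurious one already accounted for in $g(C_k)$ or cancelled by the birationality hypothesis." One has to be careful that the colliding branches are genuinely distinct branches of $C_k$ over generic fibers (this is where $C.X_t > 1$ and non-quasi-torsion are both used: there are enough pairs of points and the difference locus is genuinely two-dimensional in the relevant sense), and that different torsion points give rise to \emph{distinct} singular points of $C_k$ so that the $\delta$-invariants add up rather than overlapping. A clean way to organize this is to fix the target $J^n(X)$, choose $k$ of the form $n \cdot k'$ so that $g_k$ factors through $g_n$, and count: the preimage in $C$ of the diagonal-type locus under $d$ has, for $m | k'$, at least $\sim (C.X_t - 1) \cdot (\text{something growing})$ points mapping to torsion of order dividing $k'$, each contributing to $\delta(C_n \text{-image})$. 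I would also need to check that the birationality hypothesis on $g_n|_C$ (assumed in the statement) guarantees we are not merely re-describing a multiple cover, so that the genus really does stay bounded while the arithmetic genus blows up; this combined with adjunction on the K3 surface closes the argument, since $C_i^2 = 2g(C) - 2 + 2\delta(C_i) \to \infty$.
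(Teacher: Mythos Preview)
Your core mechanism is the right one and matches the paper: a point of $D(C)$ lying in $J^0(X)[m]$ over some $t$ comes from two distinct points $x,y\in C\cap X_t$ with $m(x-y)=0$, so $g_m$ identifies $x$ and $y$ and forces a locally reducible singularity of $g_m(C)$ there. The adjunction bookkeeping $C_i^2 = 2g(C)-2+2\delta(C_i)$ is also fine.

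However, step~(2) is wrong and this breaks your counting in step~(4). The map $g_k$ restricted to a smooth fiber is multiplication by $k$, an isogeny of degree $k^2$; it does \emph{not} multiply the degree of a line bundle on $X_t$ in the sense you want for intersection numbers. Since $g_k|_C$ is assumed birational onto its image, the fiber degree of $C_k=g_k(C)$ equals $C.X_t$ for all $k$; it does not grow. Consequently your claim that ``the number of such coincidences is at least proportional to $C_k.X_t$'' gives you nothing, and you are left without an argument that $\delta(C_k)\to\infty$.

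The paper sidesteps this by a much simpler device that you should adopt: to produce $k$ singularities, choose $k$ \emph{disjoint} open subsets $V_1,\dots,V_k\subset U$. By Proposition~\ref{prop:CapTorsionEmpty} applied to each $V_i$, the branches of $D(C)_{V_i}$ cannot avoid the torsion $\bigcup_{n\in I}X[n]$ (else they would be quasi-torsion), so there exist $m_i\equiv 1\bmod d_0$ and $t_i\in V_i$ with an isolated $m_i$-torsion point of $D(C)$ over $t_i$. Now set $m=\prod_i m_i\equiv 1\bmod d_0$. The single curve $g_m(C)$ then has a singularity over each $t_i$, and since the $t_i$ lie in disjoint sets these $k$ singularities are automatically distinct. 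For the stated conclusion in $J^n(X)$ one uses $nm\equiv n\bmod d_0$ and the isomorphism $J^{nm}(X)\cong J^n(X)$. No growth of fiber degree is needed anywhere.
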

		\begin{proof}
			Let some open $V\subset U\subset \PP^1$ be given. First we will show that there is some $m\equiv 1 \mod d_0$ such that $D(C)_V$ has a component with an isolated torsion point of order $m$.
			
			Suppose the contrary, i.e., $D(C)_V$ does not contain a component with an isolated torsion point $p_0$ of order $m \equiv 1 \mod d_0$ for some $m$. By shrinking $V$ we may assume that $D(C)_V$ is \'{e}tale over $V$, $V$ is simply connected and $J^0(X)_V\to V$ is given by a standard model. Applying Proposition \ref{prop:CapTorsionEmpty} to all branches of $D(C)_V$ we get that the branches of $D(C)_V$ - and hence all components of $D(C)$ - are quasi-torsion, which is a contradiction. 
			
			Let $k\in \NN$ be given and choose $k$ disjoint analytically open sets $V_1,\ldots,V_k\subset U$. Then by the above there are $m_1,\ldots, m_k\equiv 1\mod d_0$ such that $D(C)$ has an isolated torsion point of order $m_i$ over some $t_i\in V_i$. Denote $m = \prod m_i$. 
			Then by assumption the map $C\dashrightarrow g_m(C)$ is birational. Therefore $g_m(C)$ has a singularity over $t_i$ for all $i$ as $g_m|C$ maps two points of $C$ over $t_i$ to the same point in $g_m(C)$ by construction, giving a locally reducible singularity.
			
			For the last statement let $n\in\NN$ be given. We observe that $nm \equiv n\mod d_0$. Then $g_{nm}(C)$ also has at least $k$ singularities and the isomorphism $J^{nm}(X)\cong J^n(X)$ gives the result.
		\end{proof}
		\begin{proof}[Proof of Theorem \ref{thm:MainTheorem1}]
		    Let $R\subset X$ be a non-quasi-torsion rational curve as constructed in Theorem \ref{thm:NQTCurvesExist}. As $R$ is non-torsion, the set 
			\begin{equation*}
				\{J^k(X)_t.g_k(R)\eft k\equiv 1\mod d_0\}
			\end{equation*} 
			attains a minimum greater than $1$ for some $k_0$ as otherwise $R$ would be torsion. Now replace $R$ with $g_{k_0}(R)$ via the isomorphism $J^k(X)\cong J^1(X)$. Then the previous Proposition~\ref{prop:ProductionNewCurves} applies: If $R\dashrightarrow g_k(R)$ is not birational for some $k$, then $J^k(X)_t.g_k(R)< X_t.R$, a contradiction.
		\end{proof}

	\section{Density of lifted rational curves in \texorpdfstring{$\PP(\Omega_X)$}{P(OmX)}}
		\label{sec:JetSpace}
		Let $X\to \PP^1$ be an elliptic K3 surface. In this section we will examine the density in the jet space $\PP(\Omega_X)$ for lifts of curves $C$ that are constructed similarly to those in the last section. Recall that the lift $j\colon\tilde{C}\to \PP(\Omega_X) = P(\mathcal{T}_X)$ is analytically given by the pushforward of the tangent vectors. Moreover by construction we get
		\begin{equation*}
				c_1(\oO_{\PP(\Omega_X)}(1)).j_*(\tilde{C}) \le 2g(C)-2.
		\end{equation*}
		Now we will investigate the behaviour of lifts of (rational) curves in the jetspace of an elliptic K3 surface $X\to \PP^1$. Denote its index by $d_0$ and fix a line bundle $\mM\in\Pic X$ of degree $d_0$.  Furthermore let $C\subset X$ be a non-quasi-torsion curve coming from Section \ref{sec:ExistenceRNQT}. For $n\in I = \{n\in\NN\eft n\equiv 1\mod d_0\}$ denote by $G_n\colon J^1(X)\dashrightarrow J^n(X)\to J^1(X)$ the multiplication map $J^1(X)\dashrightarrow J^n(X)$ composed with the isomorphism $J^n(X)\to J^1(X)$ induced by the line bundle $\mM$, i.e., fiberwise a line bundle $L\in \mathrm{Jac}^1 (X_t)$ gets mapped to
		\begin{equation*}
		    L\mapsto L^{\otimes n} \mapsto L^{\otimes n} \otimes \mM|_{X_t}^{\otimes-(n-1)/d_0}.
		\end{equation*}
		\begin{lemma}
		    Let $X\to\PP^1$ be an elliptic K3 surface and $\Delta\subset \PP^1$ simply connected such that
		    \begin{equation*}
		        \CC\times \Delta / (\ZZ\tau(t)+\ZZ, t) \cong J^0(X)_\Delta \to\Delta
		    \end{equation*}
            is a standard model.
            Then we may choose an isomorphism $J^1(X)_\Delta \to J^0(X)_\Delta$ such that under this identification $G_n$ is given by
            \begin{equation*}
				(z,t)\mapsto (nz,t).
			\end{equation*}
		\end{lemma}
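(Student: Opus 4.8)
The plan is to make everything explicit in a standard model and then absorb the twist by $\mM$ into a suitable choice of coordinate on $J^1(X)_\Delta$. First I would pick a simply connected $\Delta\subset \PP^1$ over which $J^0(X)_\Delta\cong \CC\times\Delta/(\ZZ\tau(t)+\ZZ,t)$ is a standard model, together with a section $s_0$ of $J^0(X)_\Delta\to\Delta$ coming from the zero line bundle. Since $\Delta$ is simply connected and contractible, the torsor $J^1(X)_\Delta\to\Delta$ is trivial: choosing any holomorphic section $\sigma$ of $J^1(X)_\Delta$ (e.g.\ a branch of the line bundle $\mM|_{X_t}^{\otimes 1/d_0}$, or more honestly the degree-one line bundle obtained from a local section of $X_\Delta\cong J^1(X)_\Delta$) gives an isomorphism $J^1(X)_\Delta\xrightarrow{\sim}J^0(X)_\Delta$, $L\mapsto L\otimes\sigma(t)^{-1}$, hence an identification $J^1(X)_\Delta\cong\CC\times\Delta/(\ZZ\tau(t)+\ZZ,t)$ in which $\sigma$ becomes the zero section $(0,t)$. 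Under this coordinate $z$, the fiberwise group law on $J^0$ is ordinary addition mod $\ZZ\tau(t)+\ZZ$, and the tensor-power map $L\mapsto L^{\otimes n}$ reads $z\mapsto nz$ plus a correction coming from $\sigma^{\otimes n}$ versus $\sigma$.

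The key step is to check that the correction is killed by the $\mM$-twist. Recall $G_n$ sends $L\in\mathrm{Jac}^1(X_t)$ to $L^{\otimes n}\otimes\mM|_{X_t}^{\otimes -(n-1)/d_0}$. Writing $L=\sigma(t)\otimes\oO(z)$ in the chosen coordinate, we get
\begin{equation*}
G_n(L)=\sigma(t)^{\otimes n}\otimes\oO(nz)\otimes\mM|_{X_t}^{\otimes -(n-1)/d_0}.
\end{equation*}
Now $\sigma$ was chosen so that $\sigma^{\otimes d_0}$ differs from $\mM|_{X_t}$ only by the image of a holomorphic function $\Delta\to\CC$ (both are holomorphic sections of the same degree-$d_0$ line-bundle family over the contractible $\Delta$, so they agree up to translation by a holomorphic $\varphi\colon\Delta\to\CC$). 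Hence $\sigma^{\otimes n}\otimes\mM|^{\otimes -(n-1)/d_0}=\sigma^{\otimes n}\otimes\sigma^{\otimes -(n-1)}\otimes(\text{holo.\ transl.})=\sigma\otimes(\text{holo.\ transl.})$, which under our identification is the class $(\psi(t),t)$ for a holomorphic $\psi\colon\Delta\to\CC$. Thus in the coordinate $z$ the map $G_n$ is $(z,t)\mapsto(nz+\psi(t),t)$. A final change of coordinate $z\mapsto z+w(t)$, with $w$ a holomorphic solution of $(n-1)w(t)=\psi(t)$ (which exists since $n-1$ is a unit in $\oO(\Delta)$, indeed $n\equiv 1\bmod d_0$ so $n\neq 1$ unless $n=1$, and for $n=1$ the statement is trivial), straightens $G_n$ to $(z,t)\mapsto(nz,t)$ while keeping the lattice $\ZZ\tau(t)+\ZZ$ fixed. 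This is the desired isomorphism.

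I expect the main obstacle to be bookkeeping: matching the two families of degree-$d_0$ line bundles $\sigma^{\otimes d_0}$ and $\mM|_{X_t}$ honestly (i.e.\ identifying the ``holomorphic translation'' $\varphi$ and tracking its $n$-dependence through the twist), and being careful that all the fractional exponents $(n-1)/d_0$ are genuine integers because $n\in I$. One should also verify that the straightening $z\mapsto z+w(t)$ is compatible with the quotient by $\ZZ\tau(t)+\ZZ$ — it is, since it is a translation by a function valued in $\CC$, not in the lattice, so it descends to an automorphism of $\CC\times\Delta/(\ZZ\tau(t)+\ZZ,t)$. Everything else is the trivialization of torsors over a contractible base, which is routine.
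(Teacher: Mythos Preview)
Your approach is correct and would go through, but it is more circuitous than the paper's and leaves one point implicit that matters for the application.

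The paper takes precisely your first parenthetical suggestion for $\sigma$: a branch of a $d_0$-th root of $\mM$. Concretely, $\mM$ defines a section $S$ of $J^{d_0}(X)$; the multiplication map $J^1(X)_\Delta\to J^{d_0}(X)_\Delta$ is \'etale of degree $d_0^2$, and over the simply connected $\Delta$ the preimage of $S_\Delta$ splits into $d_0^2$ disjoint sections. Pick one, call it $h$. Then $h_t^{\otimes d_0}=\mM|_{X_t}$ \emph{exactly}, so in your notation $\varphi\equiv 0$, hence $\psi\equiv 0$, and $G_n$ is $(z,t)\mapsto(nz,t)$ on the nose with no straightening needed. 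That is the entire argument.

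Your route via a generic $\sigma$ followed by a translation $z\mapsto z+w(t)$ also works, but you should check that the resulting isomorphism is independent of $n$. It is: since $\sigma^{\otimes d_0}\otimes\mM^{-1}$ corresponds to some fixed $\varphi(t)$ in $J^0(X)_\Delta$, one has $\psi(t)=\tfrac{n-1}{d_0}\varphi(t)$, so the equation $(n-1)w=\psi$ gives $w=\varphi/d_0$ for every $n$. This matters because the theorem that follows applies the lemma to many $G_n$ simultaneously (the curves $G_{n_j^p}(C)$ for varying $p$ and $j$), so a single isomorphism must straighten all of them at once. As written, your construction a priori depends on $n$; the observation above repairs this, but choosing $\sigma$ to satisfy $\sigma^{\otimes d_0}=\mM$ from the start, as the paper does, avoids the issue entirely.
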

		\begin{proof}	
		    The line bundle $\mM$ induces a section $S\subset J^{d_0}(X)$ and we denote by $H$ the preimage of $S_\Delta$ under the smooth multiplication map $J^1(X)_\Delta\to J^{d_0}(X)_\Delta$. Then $H$ decomposes into a disjoint union of $d_0^2$ branches and picking one branch $h$ induces an isomorphism $J^1(X)_\Delta \to J^0(X)_\Delta$: Every point $h_t$ of $h$ over $t\in \Delta$ corresponds to a line bundle $L$ on $X_t$ of degree $1$ such that $L^{\otimes d_0} = \mM|_{X_t}$ and substracting this line bundle fiberwise yields the desired map. Viewing $G_n$ as a map $J^0(X)_\Delta\to J^0(X)_\Delta$ via this isomorphism a line bundle $L'$ on $J^0(X_t)$ gets mapped to
			\begin{equation*}
			    L' \mapsto (L'\otimes L)^{\otimes n} \otimes \mM^{\otimes (n-1)/d_0} \otimes L^{-1}= L'^{\otimes n},
			\end{equation*}
			and we are done.
		\end{proof}
        \begin{remark}
            \label{rem:TangentSpace}
            
            Let $X=(\CC\times\Delta)/(\ZZ\tau(t)+\ZZ,t)\to\Delta$ be a standard model and $p = (x\tau(t)+y,t)\in X$ a point. Then we can naturally choose an isomorphism of the tangent spaces 
            \begin{equation*}
    				T_pX \cong T_{(x\tau(t)+y,t)}\CC\times \Delta \cong	\CC\times \CC.		
    		\end{equation*}
    		For a given deck transformation $(z,t)\mapsto(z+a\tau(t)+b,t)$ the induced isomorphism on $T_pX \cong \CC\times\CC$ is given by
    		\begin{equation*}
    			(z,t) \mapsto (z+a\partial_t\tau(t), t).
    		\end{equation*}
	    \end{remark}

		The multiplication map $G_n$ is very similar to the maps $g_n$ from the last section. The difference becomes necessary as we really need to consider \emph{self}-rational maps of K3 surfaces in the following.
	
		We will show that the union of curves $G_n(C)$ lifted to the jet space $\PP(\Omega_X)$ are Zariski-dense. In particular if we take any rational non-quasi-torsion rational multisection from Section \ref{sec:ExistenceRNQT} the following proves Theorem \ref{thm:MainTheorem2}.
		\begin{theorem}
			Let $X\to\PP^1$ be an elliptic K3 surface of index $d_0$ and $C$ be a non-quasi-torsion curve. Then the curves $G_n(C)\; (n\equiv 1\mod d_0)$ lifted to $\PP(\Omega_X)$ form a dense subset in the Zariski topology.
		\end{theorem}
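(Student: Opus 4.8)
The plan is to argue by contradiction, deriving a contradiction from a transcendental estimate on the tangent directions of the curves $G_n(C)$. Suppose the Zariski closure $Z:=\overline{\bigcup_{n\in I}\widetilde{G_n(C)}}$ of the union of the lifts $\widetilde{G_n(C)}\subset\PP(\Omega_X)$ is a proper closed subset, where $I=\{n\in\NN\eft n\equiv1\bmod d_0\}$; then $\dim Z\le2$. Write $\pi\colon\PP(\Omega_X)\to X$ for the projection and let $V\subseteq\PP(\Omega_X)$ be the vertical surface, i.e.\ the closure of $\{(x,T_xX_{p(x)})\eft x\in X_U\}$ of tangent directions along the fibres. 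First I would record that the curves $G_n(C)$, $n\in I$, are pairwise distinct: if $G_n(C)=G_m(C)$ with $n\ne m$, then on a general fibre $X_t\cong\CC/\Lambda_t$, on which $G_k$ acts as multiplication by $k$ by the lemma above, the finite set $S=C\cap X_t$ satisfies $nS=mS$; the induced bijection $\phi$ of $S$ with $n\,\phi(p)=m\,p$ then satisfies $n^{k}\phi^{k}(p)=m^{k}p$, and taking $k=\lvert S\rvert!$ forces $(n^{k}-m^{k})p=0$ for all $p\in S$. Since the order is bounded independently of $t$, the curve $C$ would be torsion, hence quasi-torsion, a contradiction; so the $\widetilde{G_n(C)}$ are infinitely many pairwise distinct irreducible curves inside $Z$.

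Since $Z$ has finitely many irreducible components, one of them, $W$, contains $\widetilde{G_n(C)}$ for infinitely many $n$; as an irreducible variety containing infinitely many distinct curves, $\dim W=2$, and $\pi|_W\colon W\to X$ is dominant (its image contains the distinct curves $G_n(C)$) and hence generically finite. Moreover $W\ne V$: otherwise $\widetilde{G_n(C)}\subseteq V$ for infinitely many $n$, forcing $G_n(C)$ to lie in fibres, which is impossible for a multisection. Therefore $\mathcal B:=\pi(W\cap V)$ is a proper closed subset of $X$, and so is the locus where $\pi|_W$ fails to be finite. I would then fix a small Euclidean ball $B\subseteq X_U$ disjoint from both loci, shrunk so that $\pi|_W$ is finite over $B$ and the finite sets $W_x=\pi^{-1}(x)\cap W$ stay at distance $\ge\rho>0$ from the vertical direction, uniformly for $x\in B$.

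The crux is to produce, for all large $n$ in this infinite set, a point of $\widetilde{G_n(C)}$ lying over $B$ whose tangent direction is almost vertical. I would choose a disc $\Delta\subset U$ with $J^0(X)_\Delta\cong\CC\times\Delta/(\ZZ\tau(t)+\ZZ,t)$ over which $G_n$ is $(z,t)\mapsto(nz,t)$, and a branch $t\mapsto(f(t),t)$ of $C_\Delta$ whose lift $f\colon\Delta\to\CC$ satisfies $f'(t_0)\ne0$ for some $t_0$; such a branch exists, since otherwise every branch of $C$ is a constant section in every standard model, hence torsion or elementary quasi-torsion, making $C$ quasi-torsion. By Remark~\ref{rem:TangentSpace} the lift $\widetilde{G_n(C)}$ contains all points $\bigl(nf(t)\bmod\Lambda_t,\ t,\ [nf'(t):1]\bigr)$, $t\in\Delta$. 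Two observations now finish the argument: (i) $[nf'(t):1]\to[1:0]$, the vertical direction, uniformly for $t$ near $t_0$ as $n\to\infty$; and (ii) for prescribed $w_0\in\CC$, solving $f'(t_0)\,u\equiv w_0-nf(t_0)\pmod{\Lambda_{t_0}}$ for a bounded solution $u=u_n$ and using a Taylor expansion, one gets $nf(t_0+u_n/n)\to w_0\pmod{\Lambda_{t_0}}$. Choosing $w_0$ with $x_0:=(w_0\bmod\Lambda_{t_0},t_0)\in B$, the points $y_n:=\bigl(nf(t_0+u_n/n)\bmod\Lambda,\ t_0+u_n/n,\ [nf'(t_0+u_n/n):1]\bigr)$ lie on $\widetilde{G_n(C)}$, satisfy $\pi(y_n)\to x_0\in B$, and their directions tend to the vertical. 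For $n$ large, $y_n\in W$ and $\pi(y_n)\in B$, so the direction of $y_n$ lies in $W_{\pi(y_n)}$, at distance $\ge\rho$ from the vertical — contradicting (i) and (ii). Hence $Z=\PP(\Omega_X)$.

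I expect step (ii) to be the main obstacle: one must control simultaneously the moving lattice $\Lambda_t$, the footprint $nf(t)\bmod\Lambda_t$, and the tangent direction under the rescaling $t=t_0+u/n$, and, just as delicately, ensure that the limit point $x_0$ can be kept away from every degeneracy locus of the hypothetical surface $W$, so that the uniform gap estimate of the second paragraph really applies. As a by-product the same computation shows, unconditionally, that $V\subseteq Z$, which one may record as a separate remark.
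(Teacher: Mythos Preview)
Your contradiction framework is sensible and the distinctness argument for the $G_n(C)$ is essentially fine (though $\phi$ need not be a bijection of $S$, so the line ``taking $k=|S|!$ forces $(n^k-m^k)p=0$'' needs a small patch via eventual periodicity of orbits). The real problem is step~(i): the assertion that the tangent direction of $\widetilde{G_n(C)}$ at the point lying over $nf(t)\bmod\Lambda_t$ is $[nf'(t):1]$, and hence tends to the vertical $[1:0]$, is not correct as a statement in $\PP(\Omega_X)$. Remark~\ref{rem:TangentSpace}, which you cite, says precisely the opposite of what you use it for: the identification $T_pX\cong\CC\times\CC$ depends on the lift, and the deck transformation $(z,t)\mapsto(z+a\tau(t)+b,t)$ acts on tangent vectors by $(\zeta,\eta)\mapsto(\zeta+a\,\tau'(t)\,\eta,\eta)$. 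Thus if one reduces $nf(t)$ to a fundamental domain by subtracting $a_n\tau(t)+b_n$ with $a_n,b_n\in\ZZ$, the tangent direction in the fixed chart near $x_0$ is $[\,nf'(t)-a_n\tau'(t):1\,]$, not $[nf'(t):1]$. Since $a_n$ grows linearly in $n$ (roughly $a_n\approx n\,\alpha(t)$ where $f(t)=\alpha(t)\tau(t)+\beta(t)$ with $\alpha,\beta$ real), the two linear-in-$n$ terms compete and the quotient need not go to infinity. Concretely, for an elementary quasi-torsion branch $f(t)=a\tau(t)+b$ with $a,b\in\RR$, the reduced curve is $t\mapsto\{na\}\tau(t)+\{nb\}$ and its tangent direction $[\{na\}\,\tau'(t):1]$ stays bounded away from vertical; so your argument for (i), which nowhere uses the non-quasi-torsion hypothesis, cannot be right as stated. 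The ``by-product'' $V\subseteq Z$ is likewise unjustified.

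The paper's proof avoids this obstacle by never trying to compare tangent directions across different points of $X$. Instead it uses Corollary~\ref{cor:TorsionDense} to find, over a dense set of parameters $t_j$, integers $n_j$ with $(n_j-1)f(t_j)\in\Lambda_{t_j}$; then for all powers $n_j^p$ the curves $G_{n_j^p}(C)$ pass through the \emph{same} point of $X$ over $t_j$, so their tangent directions can be compared in a single chart with \emph{no} deck-transformation ambiguity. If those directions coincide for two different powers one gets $\partial_t f(t_j)=\tfrac{a_j}{n_j-1}\,\partial_t\tau(t_j)$, and density of the $t_j$ then forces $f'/\tau'$ to be real-valued, hence constant, and finally $C$ to be quasi-torsion. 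In effect, the missing ingredient in your approach---controlling $nf'(t)-a_n\tau'(t)$---is exactly what the paper isolates by anchoring everything at common intersection points; if you want to repair your line of argument you will need a comparable mechanism, and it is hard to see one that does not reduce to the paper's computation.
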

		\begin{proof}
			Denote the projection by $\operatorname{pr}\colon  \PP(\Omega_X)\to X$.
			It suffices to show that given any open subset $V\subset U\subset \PP^1$ there is a point $p\in C_V$ such that $\operatorname{pr}^{-1}(p)$ intersects the union of the lifts of the $G_n(C)$ at infinitely many points.
			
			By shrinking $V$ we may assume by the previous lemma that $X$ is given by a standard model 
			\begin{equation*}
			    X \cong (\CC\times\Delta)/(\ZZ\tau(t)+\ZZ,t),
			\end{equation*}
			the map $G_n$ is given by $(z,t)\mapsto (nz,t)$, and $C$ is smooth and locally given by $(f(t),t)$ for some holomorphic function $f\colon \Delta\to \CC$.
			As $C$ is non-quasi-torsion by assumption the curve $G_{d_0}(C)$ is non-quasi-torsion as well and we can apply that its torsion points are dense, see Corollary \ref{cor:TorsionDense}. This means that there exist $t_j\in V$ and $n_j\in I = \{n\in \NN\eft n\equiv 1\mod d_0\}$ such that
			\begin{equation}
				\label{eq:TosionOfF}
				(n_j-1)f(t_j) = a_j\tau(t_j)+b_j
			\end{equation}
			for some $a_j,b_j\in\ZZ$. Then for every $k\in \NN$ the $n_j^kf(t_j)$ satisfy
			\begin{equation*}
				n_j^k f(t_j) = n_j^{k-1}(a_j\tau(t_j)+b_j) + n_j^{k-1} f(t_j) =  f(t_j) + (a_j\tau(t_j)+b_j)\tfrac{1-n_j^{k}}{1-n_j}, 
			\end{equation*}
			where the last equality follows by induction.
			Therefore for $p,q>0$ the curve $G_{n_j^p}(C)$ intersects $G_{n_j^q}(C)$ over $t_j\in V$. Assume that for almost all indices $j$ there exist $p>q>0$ such that the tangent directions of $G_{n_j^p}(C)$ and $G_{n_j^q}(C)$ are the same over $t_j$. Then by Remark~\ref{rem:TangentSpace} this is equivalent to
			\begin{equation*}
				n_j^q\partial_t f(t_j)-a_j\tfrac{n_j^q-1}{n_j-1}\partial_t \tau(t_j) = n_j^p\partial_t f(t_j)- a_j\tfrac{n_j^p-1}{n_j-1}\partial_t \tau(t_j).
			\end{equation*}
			In other words
			\begin{equation*}
				\partial_t f(t_j) = \tfrac{a_j}{n_j-1}\partial_t\tau(t_j)
			\end{equation*}
			independently of $p,q$. 
			
			In the isotrivial case this means that $f$ is constant as $\partial_t \tau = 0$. If this was the case for all branches of $C$ over $V$ then the curve would be quasi-torsion, a contradiction. 
			
			In the non-isotrivial case the holomorphic function $\frac{\partial_t f}{\partial_t \tau}$ maps to $\RR$. Therefore it is constant as well and $a = \tfrac{a_j}{n_j-1}$ does not depend on $j$. Then on the other hand equation \eqref{eq:TosionOfF} yields that the holomorphic function $f-a\tau$ also maps to $\RR$ and therefore $b= \tfrac{b_j}{n_j-1}$ is independent of $j$ as well. But this in turn yields that $C$ is quasi-torsion if this was the case for every branch over $V$, and hence we are done.
		\end{proof}
	\section{Applications}
		\label{sec:Applications}
		\noindent As we will see the last section provides a simple tool to prove Kobayashi's Theorem in the special case of elliptic K3 surfaces.
		\begin{corollary}[Kobayashi's Theorem]
			\label{thm:Kobayashi}
			Let $X$ be an elliptic K3 surface. Then 
			\begin{equation*}
				\hh^0(X,\operatorname{Sym}^n \Omega_X) = 0
			\end{equation*} for all $n>0$.
		\end{corollary}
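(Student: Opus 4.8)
The plan is to derive Kobayashi's vanishing directly from the density statement of Theorem~\ref{thm:MainTheorem2}. Suppose for contradiction that some nonzero $\omega \in \hh^0(X,\operatorname{Sym}^n\Omega_X)$ exists for some $n>0$. A global symmetric differential form of degree $n$ defines a hypersurface $Z_\omega \subset \PP(\Omega_X)$ in the class $c_1(\oO_{\PP(\Omega_X)}(n)) = n\,c_1(\oO_{\PP(\Omega_X)}(1))$ (twisted by the pullback of a divisor from $X$ if $\omega$ is not "primitive", but one can reduce to the case where $Z_\omega$ is a genuine effective divisor). The key point is that $Z_\omega$ is a \emph{proper} Zariski-closed subset of the threefold $\PP(\Omega_X)$: its complement is nonempty and open. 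Since $\PP(\Omega_X)$ is irreducible, any dense subset must meet this complement.

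First I would recall the standard fact that a rational curve $\tilde C \to X$ with lift $j\colon \tilde C \to \PP(\Omega_X)$ satisfies
\begin{equation*}
    c_1(\oO_{\PP(\Omega_X)}(1)).j_*(\tilde C) \le 2g(\tilde C) - 2 = -2 < 0,
\end{equation*}
as noted in Section~\ref{sec:JetSpace}. Next I would observe that if the image $j(\tilde C)$ were \emph{not} contained in $Z_\omega$, then the intersection number $Z_\omega . j_*(\tilde C)$ would be a nonnegative number; but this intersection number equals $n\,c_1(\oO_{\PP(\Omega_X)}(1)).j_*(\tilde C)$ (possibly plus a nonnegative correction term coming from the pullback-from-$X$ part of the class of $Z_\omega$, which is nef-ish enough — here one must be slightly careful and use that the correction is a pullback of an effective divisor on $X$ meeting $C$ nonnegatively), which is strictly negative. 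This contradiction forces $j(\tilde C) \subset Z_\omega$ for \emph{every} rational curve $C \subset X$.

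Therefore the union of all lifts of rational curves to $\PP(\Omega_X)$ is contained in $Z_\omega$, a proper closed subset. But Theorem~\ref{thm:MainTheorem2} asserts this union is Zariski-dense, so $Z_\omega = \PP(\Omega_X)$, which is absurd since $Z_\omega$ is a hypersurface (equivalently, $\omega$ vanishes identically on a Zariski-dense set of fibers of $\operatorname{pr}$, hence everywhere). This contradiction shows $\hh^0(X,\operatorname{Sym}^n\Omega_X) = 0$.

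The main obstacle I anticipate is the bookkeeping around the exact divisor class of $Z_\omega$ in $\PP(\Omega_X)$: a section of $\operatorname{Sym}^n\Omega_X$ corresponds to a section of $\oO_{\PP(\Omega_X)}(n)$, whose zero locus is a divisor linearly equivalent to $n\xi$ where $\xi = c_1(\oO_{\PP(\Omega_X)}(1))$, with no twist — so in fact the correction term worry above is a non-issue and one gets cleanly $Z_\omega . j_*(\tilde C) = n\,\xi.j_*(\tilde C) < 0$. The only genuinely substantive input is the negativity $\xi.j_*(\tilde C) < 0$ for rational curves together with the density theorem; everything else is the elementary observation that a curve with negative intersection against an effective divisor must lie inside it. I would make sure to state explicitly that $Z_\omega$ does not contain the whole space (because $\oO_{\PP(\Omega_X)}(n)$ restricted to a fiber $\PP^1$ of $\operatorname{pr}$ has positive degree, so the section cannot vanish on a fiber unless it vanishes identically there, and a nonzero global section cannot vanish on all fibers).
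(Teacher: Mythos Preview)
Your proposal is correct and follows essentially the same approach as the paper: both use the identification $\hh^0(X,\operatorname{Sym}^n\Omega_X)=\hh^0(\PP(\Omega_X),\oO(n))$, the negativity $c_1(\oO(1)).j_*(\tilde C)<0$ for lifts of rational curves, and the density Theorem~\ref{thm:MainTheorem2} to conclude that $\oO(n)$ has no nonzero sections. The paper's version is terser (it simply says ``$\oO(n)$ is not effective''), while you spell out the standard step that a curve with negative intersection against an effective divisor must lie in its support; your own final paragraph already correctly notes that the twist/correction worry is vacuous, so the argument is clean.
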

		\begin{proof}
			Let $\PP(\Omega_X)$ be the first jet-space of $X$. Then we have the equality
			\begin{equation*}
				\hh^0(\PP(\Omega_X), \oO(n)) = \hh^0(X,\operatorname{Sym}^n \Omega_X).
			\end{equation*} But we know from the last section that there are rational curves $R_i\subset X$ such that the union of their lifts is Zariski-dense in the jet space. But by construction $c_1(\oO(n)).R_i< 0$ and hence $\oO(n)$ is not effective.
		\end{proof}
		\noindent We would also like to mention the following corollary on the density of rational curves for all elliptic K3 surfaces in the usual topology. For a Baire-general K3 surface this was achieved in \cite{chen2013density}. Moreover in {\itshape loc.\ cit.}\ the following theorem has been proven:
		\begin{theorem}[{\cite[Theorem 1.6]{chen2013density}}]
			Let $X\to\PP^1$ be an elliptic K3 surface. If there is a non-torsion rational multisection then the union of rational curves is dense in the usual topology.
		\end{theorem}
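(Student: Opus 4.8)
The plan is to reconstruct the dynamical argument of Bogomolov--Tschinkel and Chen--Lewis, organised around the machinery of the previous sections. First I would reduce to a fiberwise statement: it is enough to produce, for all $t$ in some \emph{dense} subset of $\PP^1$, a family of rational curves on $X$ whose intersections with the smooth fiber $X_t$ are dense in $X_t$ for the Euclidean topology. Indeed, given a Euclidean open ball $B\subset X$, the set $W=\{t\in\PP^1\mid B\cap X_t\neq\emptyset\}$ is open and nonempty; picking $t$ in $W$ from this dense set, one of the curves meets the nonempty Euclidean-open set $B\cap X_t\subset X_t$, hence meets $B$, so the union of rational curves meets every ball and is therefore dense.

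To produce the curves, let $R$ be the given non-torsion rational multisection and $d_0$ the index of $X$. For $n\equiv 1\bmod d_0$ the self-rational map $G_n\colon X\dashrightarrow X$ of Section~\ref{sec:JetSpace} (equivalently $g_n$ followed by $J^n(X)\cong X$) does not contract $R$, since $R$ is not contained in a fiber, so $R_n:=G_n(R)$ is again a rational multisection. Over a simply connected $\Delta\subset U$ on which, as in Section~\ref{sec:JetSpace}, $X_\Delta$ is a standard model $\CC\times\Delta/(\ZZ\tau(t)+\ZZ,t)$ with $G_n$ given by $(z,t)\mapsto(nz,t)$, I would write the branches of $R$ as $z_i(t)=a_i(t)\tau(t)+b_i(t)$ with $a_i,b_i$ real-analytic. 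Then $R_n\cap X_t=\{nz_1(t),\dots,nz_d(t)\}$, so the Euclidean closure of $\bigcup_nR_n\cap X_t$ (union over $n\equiv 1\bmod d_0$) contains $\bigcup_i\bigl(z_i(t)+\overline{\langle z_i(t)\rangle}\bigr)$, where $\overline{\langle z_i(t)\rangle}$ denotes the closure of the subgroup of $X_t$ generated by $z_i(t)$. It remains to show that, for a dense set of $t$, some $\overline{\langle z_i(t)\rangle}$ is all of $X_t$.

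This last point is the heart of the matter and, I expect, the main obstacle: one must rule out the degeneration in which all the orbit closures $\overline{\langle z_i(t)\rangle}$ stay proper (i.e.\ $1$-dimensional). Since $R$ is non-torsion, at least one branch, say $z_1$, is non-torsion (otherwise all differences $z_i-z_j$ would be torsion), so $z_1(t)$ has infinite order for all $t\in\Delta$. If $\overline{\langle z_1(t)\rangle}\neq X_t$ throughout some open $O\subset\Delta$, then for $t\in O$ there is a primitive $(p,q)\in\ZZ^2\setminus\{0\}$ with $p\,a_1(t)+q\,b_1(t)\in\ZZ$; by Baire's theorem one such $(p,q)$ does this on a subset of $O$ with nonempty interior, and then by the identity theorem $p\,a_1(t)+q\,b_1(t)$ is a constant integer on all of $\Delta$, so $z_1(t)$ lies, for every $t\in\Delta$, on a fixed $1$-dimensional subgroup-circle $C_{p,q}(t)\subset X_t$. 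In the isotrivial case $\tau$ is constant on $\Delta$, which forces the holomorphic function $z_1$ to have image in a real line, hence to be constant; but a constant branch of an algebraic curve is fixed up to $\ZZ^2$ by the finite local monodromy of each singular fiber and is therefore torsion --- contradicting $z_1$ non-torsion. In the non-isotrivial case the monodromy $\Gamma\subset\SL(2,\ZZ)$ is infinite (it contains a parabolic from an $I_n$- or $I_n^*$-fiber, Section~\ref{sub:singularFibers}) and non-elementary, since it surjects onto $\SL(2,\ZZ/p\ZZ)$ for $p\gg0$ (Section~\ref{sec:QuasiTorsion}); analytically continuing the algebraic multisection $R$ around loops carries the circle $C_{p,q}$ through an infinite orbit of distinct circles, so by the pigeonhole principle one of the $d$ branches lies on infinitely many of them for all $t\in\Delta$ and is therefore torsion, whence --- the monodromy being transitive on the branches of the irreducible $R$ --- $z_1$ is torsion as well, a contradiction. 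Therefore $\overline{\langle z_1(t)\rangle}=X_t$ for a dense set of $t$, and the first step finishes the proof.

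Alternatively one may produce the curves by base change: $\widetilde X:=X\times_{\PP^1}\widetilde R\to\widetilde R\cong\PP^1$ is a Jacobian elliptic surface whose diagonal section is non-torsion precisely because $R$ is; its integer multiples are sections, hence rational curves, and they push forward along $\widetilde X\to X$ to rational multisections of $X$ with the same fiberwise description, so the obstacle above is unchanged.
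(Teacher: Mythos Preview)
The paper does not prove this theorem; it is quoted verbatim from Chen--Lewis \cite{chen2013density} and used only as a black box for Corollary~\ref{cor:DensityUsual}. So there is no in-paper argument to compare your proposal against, and your attempt has to be judged on its own.

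Your non-isotrivial argument is essentially sound: Baire plus the identity principle pins a bad branch to a fixed rational circle $C_{p,q}$, and since the global monodromy orbit of $[p\!:\!q]\in\PP^1(\QQ)$ is infinite, pigeonholing among the $d$ branches forces one of them onto two circles with independent directions, hence to be torsion; transitivity of the monodromy then makes every branch torsion and $R$ torsion, a contradiction.

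The isotrivial step, however, has a genuine gap. The assertion that ``a constant branch of an algebraic curve is fixed up to $\ZZ^2$ by the finite local monodromy of each singular fiber and is therefore torsion'' is not justified: the local monodromy $\gamma$ \emph{permutes} the branches, and $\gamma\cdot z_1\equiv z_1\pmod{\ZZ^2}$ only follows when $\gamma$ fixes the branch $z_1$ in that permutation action --- which need not ever happen (e.g.\ when the finite global monodromy $\Gamma$ acts freely on the set of branches). A constant non-torsion branch is entirely possible; in the language of this paper, ``all branches constant'' says precisely that $R$ is quasi-torsion, and the separate treatment of the isotrivial case in Section~\ref{sec:ExistenceRNQT} already reflects that non-torsion does \emph{not} imply non-quasi-torsion there. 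Concretely, the family $\{G_n(R)\}_n$ you produce meets a smooth fiber only in $\bigcup_i\bigl(z_i(t)+\overline{\langle d_0z_i(t)\rangle}\bigr)$, which may well be a finite union of circles even though $R$ is non-torsion. One way to close the gap is to enlarge the supply of rational curves: after base change to $\tilde R$, use all $\ZZ$-linear combinations of the $d$ tautological sections rather than just multiples of the diagonal one, so that fiberwise you see a coset of the group $\langle z_1(t),\dots,z_d(t)\rangle$; rerunning the Baire/monodromy argument then forces $\Gamma$ to preserve a rational direction, whence $\Gamma\subset\{\pm I\}$, and a Riemann--Hurwitz count over the four $I_0^*$-fibers shows that a rational non-torsion bisection cannot exist.
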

		\noindent Using Theorem~\ref{thm:NQTCurvesExist} we directly get the following stronger result:
		\begin{corollary}
			\label{cor:DensityUsual}
			Let $X\to\PP^1$ be an arbitrary projective elliptic K3 surface. Then the union of rational curves is dense in the usual topology.
		\end{corollary}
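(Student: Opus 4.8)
The plan is to reduce the statement to the theorem of Chen--Lewis quoted just above, whose hypothesis is the existence of a non-torsion rational multisection for \emph{some} elliptic fibration on $X$. The key observation is that density of rational curves in the usual topology is an intrinsic property of the surface $X$ and does not refer to any particular elliptic structure; hence it suffices to exhibit one elliptic fibration $p'\colon X\to\PP^1$ (not necessarily the given one) that carries a non-torsion rational multisection.

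First I would apply Theorem~\ref{thm:NQTCurvesExist}. If $p\colon X\to\PP^1$ is non-isotrivial, or isotrivial with $c(X\xrightarrow{p}\PP^1)>0$, the theorem produces a non-quasi-torsion rational curve $R\subset X$ for the fibration $p$ itself; in the remaining isotrivial case with $c(X\xrightarrow{p}\PP^1)\le 0$ it first replaces $p$ by another fibration $p'\colon X\to\PP^1$ satisfying the preceding hypotheses and then yields such an $R$ for $p'$. In either case $R$ is an irreducible curve not contained in a fiber of the relevant fibration, hence a rational multisection of it.

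Next I would note that a non-quasi-torsion curve is in particular non-torsion: by the Remark in Section~\ref{sec:QuasiTorsion} every torsion multisection is quasi-torsion, so the hypothesis that $R$ is non-quasi-torsion forces $R$ to be non-torsion. Therefore the fibration carrying $R$ admits a non-torsion rational multisection, and the theorem of Chen--Lewis (\cite[Theorem 1.6]{chen2013density}) applies to it, giving that the union of rational curves on $X$ is dense in the usual topology.

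There is no genuine obstacle here beyond assembling these ingredients; the only point requiring a word of care is that the non-torsion multisection may exist only after passing from $p$ to a different fibration $p'$ on $X$, which is harmless precisely because usual-topology density of rational curves is a property of the surface and is insensitive to the choice of elliptic pencil.
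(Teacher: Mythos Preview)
Your proposal is correct and follows essentially the same approach as the paper: apply Theorem~\ref{thm:NQTCurvesExist} (changing the fibration if necessary) to obtain a non-quasi-torsion, hence non-torsion, rational multisection, and then invoke the Chen--Lewis theorem. The paper in fact records the corollary as an immediate consequence of Theorem~\ref{thm:NQTCurvesExist} without further argument.
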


	\bibliography{main2}{} 
	\bibliographystyle{plain}
	\vspace{3mm}
	\footnotesize\textsc{Georg-August-Universität Göttingen, Mathematisches Institut, Bunsenstr. 3-5, 37073 Göttingen, Germany}\\
	\textit{Email address: }\href{mailto:jonas.baltes@mathematik.uni-goettingen.de}{\nolinkurl{jonas.baltes@mathematik.uni-goettingen.de}}
	
\end{document}